\numberwithin{equation}{section}
\newtheorem{theorem}{Theorem}[section]
\newtheorem{definition}[theorem]{Definition}
\newtheorem{proposition}[theorem]{Proposition}
\newtheorem{lemma}[theorem]{Lemma}
\newtheorem*{remark*}{Remark}
\numberwithin{equation}{section}
\newcommand{\dv}{\text{div}}
\newcommand{\eps}{\varepsilon}
\newcommand{\dt}{\delta}
\newcommand{\al}{\alpha}
\newcommand{\bn}{\mathbf{n}}
\newcommand{\bM}{\mathbf{M}}
\newcommand{\cM}{\mathcal{M}}
\newcommand{\cH}{\mathcal{H}}
\newcommand{\cC}{\mathcal{C}}
\newcommand{\cK}{\mathcal{K}}
\newcommand{\cT}{\mathcal{T}}
\newcommand{\cQ}{\mathcal{Q}}
\newcommand{\cX}{\mathcal{X}}
\newcommand{\R}{\mathbb{R}}
\newcommand{\bS}{\mathbb{S}}
\DeclareMathOperator{\pr}{\partial}
\title[Generic LSF]{Generic regularity of Level Set Flows with spherical singularity}
\author{Ao Sun, Jinxin Xue}
\address{Department of Mathematics, Lehigh University, Chandler-Ullmann Hall, Bethlehem, PA 18015}
\email{aos223@lehigh.edu}
\address{New Cornerstone Science Laboratory, Department of Mathematics, Rm A115, Tsinghua University, Haidian District, Beijing, 100084}
\email{jxue@tsinghua.edu.cn}
\date{\today}
\begin{document}
	\maketitle
	\begin{abstract}The sphere is well-known as the only generic compact shrinker for mean curvature flow (MCF). In this paper, we characterize the generic dynamics of MCFs with a spherical singularity. In terms of the level set flow formulation of MCF, we establish that generically the arrival time function of level set flow with spherical singularity has at most $C^2$ regularity.
	\end{abstract}
	
	\section{Introduction}
	
	A family of hypersurfaces $\{\bM_t\}_{t\in[0,T)}$ in $\R^{n+1}$ is flowing by mean curvature if it satisfies the equation $\pr_t x=-\mathbf{H}(x)$,
	where $x$ denotes the position vector and $\mathbf{H}$ denotes the mean curvature vector pointing to the outer normal direction.
	
	In the context of numerical analysis, Osher and Sethian \cite{OS} introduced the level set flow (LSF) method to interpret the mean curvature flow (MCF) as the level sets of some functions. Specifically, if we consider a function $f$ satisfying the equation
	\[
	\pr_t f=|\nabla f|\dv\left(\frac{\nabla f}{|\nabla f|}\right),
	\]
	then the level sets $\{x:f(x,t)=0\}=:\bM_t$ are hypersurfaces flowing by MCF. Evans-Spruck \cite{ES} and Chen-Giga-Goto \cite{CGG} used the viscosity method to rigorously establish the existence and uniqueness of weak solutions to the level set flow equation. Such weak solutions are called viscosity solutions, and they are Lipschitz. Moreover, this approach enables a formulation of weak MCF. For further advancements in this direction, we refer interested readers to \cite{I1, I2, Wh1}.
	
	A special assumption imposed on MCF is mean convexity, meaning that the initial hypersurface has nonnegative mean curvature everywhere. This assumption leads to fruitful consequences for mean convex MCF, as highlighted in works such as \cite{HS1, HS2, Wh2, Wh3, Wa, HK}, among others. The corresponding LSF has monotonic advancing fronts. In this scenario, an arrival time function can be defined as $u(x) = f(x,t) - t$, where $u$ satisfies the equation
	
	\[
	-1=|\nabla u|\dv\left(\frac{\nabla u}{|\nabla u|}\right),
	\]
	and the level set $\{x : u(x) = t\}$ represents the time slice $\mathbf{M}_t$ of the MCF. It is worth noting that this equation is not defined at $x$ where $\nabla u(x)=0$, and \cite{ES} also defined the solution in the viscosity sense. Throughout this paper, we exclusively focus on this viscosity arrival time formulation of LSF. 
	
	Understanding the singularities that arise in MCF is a central problem. When a singularity develops at the spacetime point $(0,0) \in \mathbb{R}^{n+1} \times \mathbb{R}$, Huisken \cite{H2} introduced the rescaled mean curvature flow (RMCF) given by $\partial_t x = -\left(H - \frac{\langle x, \mathbf{n} \rangle}{2}\right)\mathbf{n}$ to perform a spacetime rescaling. A limit surface of RMCF is called a shrinker. It is a stationary solution of the RMCF equation, and satisfies the equation $H - \frac{\langle x, \mathbf{n} \rangle}{2} = 0$. Consequently, a shrinker serves as a model for the singularity. 
	
	Among all the shrinkers, the simplest one is the sphere $\mathbb{S}^n:=S^n(\sqrt{2n})$ with radius $\sqrt{2n}$ centered at the origin. In \cite{H1}, Huisken proved that any convex hypersurface in $\R^{n+1}$ for $n\geq 2$ must shrink to a single spherical singularity under MCF. Later, Huisken \cite{H2} proved that the sphere is the only closed singularity model for mean convex MCF. Another notable contribution came from Colding and Minicozzi in \cite{CM1}, where they showed that among all closed singularity models of MCF, the sphere is the only variationally stable one.
	
	In the context of LSF, an important problem concerns the regularity of its solutions, as the \emph{a priori} regularity of a viscosity solution is only Lipschitz. We refer the readers to \cite{CM2, CM3} for the general regularity results of the arrival time function. Building upon his work on convex MCF, Huisken \cite{H3} established that the LSF corresponding to a convex MCF possesses $C^2$ regularity. Sesum \cite{Se} later constructed examples demonstrating that the LSF with a single spherical singularity may not have $C^3$ regularity. On the other hand, the classical example of the shrinking sphere illustrates that the regularity of the LSF can be $C^\infty$. Furthermore, Strehlke \cite{St1} constructed examples indicating that the regularity of the LSF with a single spherical singularity can exhibit arbitrarily high smoothness.
	
	Given the aforementioned results, it becomes natural to ask about the typical regularity of the LSF with a single spherical singularity. Our main theorem demonstrates that low regularity is the generic behavior of the LSF. To establish this, we consider the space $\mathcal{G}$ consisting of closed orientable mean convex hypersurfaces embedded in $\mathbb{R}^{n+1}$, where $n \geq 2$, such that the mean curvature flow starting from these hypersurfaces only generates a single spherical singularity. We equip $\mathcal{G}$ with the $C^r$-topology for any $r > 2$, following the framework introduced by White in \cite{Wh0}. For a given closed embedded orientable $C^r$ hypersurface $\Sigma \subset \mathbb{R}^{n+1}$, we consider a $C^r$ unit vector field $\mathbf p(x)$ defined over $\Sigma$, such that $\mathbf p(x)$ and $T_x\Sigma$ span $T_x\R^{n+1}$. Let $C^r(\Sigma)$ denote the space of $C^r$ functions defined on $\Sigma$. We can then find a $\delta_0 > 0$ such that for all $\|f\|_{C^r} \leq \delta_0$, the sets $\{x + f(x)\mathbf p(x) \ |\ x \in \Sigma\}$ are still closed embedded $C^r$ hypersurfaces. These hypersurfaces form a local chart of the Banach manifold comprising all nearby closed embedded 
	$C^r$ hypersurfaces.

	\begin{theorem}\label{thm:main}
		There is an open and dense subset of $\mathcal G$, such that the LSF starting from any hypersurface in such a set is $C^2$ but not $C^3$.
	\end{theorem}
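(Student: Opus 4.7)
My plan is to split the proof into a universal upper bound, $u\in C^2$, and a generic failure, $u\notin C^3$ on an open dense subset. The upper bound does not require genericity: for any $\Sigma\in\mathcal G$ the arrival-time regularity theory of Colding--Minicozzi \cite{CM2,CM3} for mean convex flows, applied at the unique spherical singularity produced by $\Sigma$, yields $u\in C^2$. Thus the substantive content of the theorem is the failure of $C^3$ on an open dense subset.

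To extract the lower bound I would analyze the rescaled mean curvature flow $\tilde{\mathbf{M}}_\tau=e^{\tau/2}\mathbf{M}_{-e^{-\tau}}$, after a spacetime translation and dilation placing the singular point at the origin at time $0$ and normalizing the asymptotic limit to $\mathbb{S}^n=\mathbb{S}^n(\sqrt{2n})$. Writing $\tilde{\mathbf{M}}_\tau$ as the normal graph of $v(\cdot,\tau)$ over $\mathbb{S}^n$, the equation linearizes to $\partial_\tau v=Lv+O(v^2)$ with $L=\Delta_{\mathbb{S}^n}+1$, since $|A|^2=\tfrac12$ and the position vector is normal to $\mathbb{S}^n$. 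The eigenvalues of $L$ are $\mu_k=1-k(k+n-1)/(2n)$; the unstable modes $k=0,1$ (dilations and translations) are killed by the spacetime normalization, so the first surviving eigenvalue is $\mu_2=-1/n$, with eigenspace the degree-two spherical harmonics. A standard spectral decomposition of the semilinear equation then yields
\[
v(\omega,\tau)=e^{-\tau/n}\phi_2(\omega)+o(e^{-\tau/n})\quad\text{as }\tau\to\infty,
\]
for a degree-two spherical harmonic $\phi_2=\phi_2(\Sigma)$, and the mapping $\Sigma\mapsto\phi_2$ is continuous in the $C^r$-topology by the parabolic well-posedness of the RMCF combined with the spectral gap $\mu_2-\mu_3=(n+4)/(2n)>0$. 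Converting to physical coordinates, where $\sqrt{-t}\cdot(-t)^{1/n}=(-t)^{1/2+1/n}$, produces the arrival-time expansion
\[
u(x)=-\frac{|x|^2}{2n}+c\,|x|^{2/n}P_2(x)+o\bigl(|x|^{2+2/n}\bigr),
\]
where $P_2$ is the harmonic quadratic polynomial corresponding to $\phi_2$. A direct differentiation of $|x|^{2/n}P_2(x)$ shows it is $C^2$ at the origin but its third derivatives fail to extend continuously there, so under the genericity condition $\phi_2\neq 0$ the arrival time $u$ is $C^2$ but not $C^3$.

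It remains to show that $\{\phi_2\neq 0\}$ is open and dense in $\mathcal G$. Openness follows from two ingredients: $\mathcal G$ itself is open in $C^r$ (stability of the spherical singularity among mean convex flows, since only spheres can arise as compact blow-ups of mean convex flows and the single-spherical-singularity property persists under small perturbations), together with the continuity of $\Sigma\mapsto\phi_2$ established above. For density, given $\Sigma\in\mathcal G$ with $\phi_2(\Sigma)=0$, I would construct a perturbation as follows: fix a large rescaled time $\tau_0$ where $\tilde{\mathbf{M}}_{\tau_0}$ is $C^r$-close to $\mathbb{S}^n$, perturb $\tilde{\mathbf{M}}_{\tau_0}$ by $\varepsilon\psi$ for a fixed nonzero degree-two spherical harmonic $\psi$, and then transport this perturbation back to the initial time along the smooth MCF for $t<0$ to obtain $\Sigma_\varepsilon\in\mathcal G$ with $\phi_2(\Sigma_\varepsilon)=\varepsilon\psi+O(\varepsilon^2)\neq 0$ for $\varepsilon$ small. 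The main obstacle is this density step: one must justify the backward propagation of a perturbation toward a flow that later becomes singular, verify that $\Sigma_\varepsilon$ still lies in $\mathcal G$, and confirm that its forward RMCF asymptotic mode really is the prescribed $\varepsilon\psi$ up to higher order. All three are to be handled by the openness of $\mathcal G$, the smoothness of the MCF strictly before the singular time, and the spectral-gap uniqueness of the leading asymptotic mode.
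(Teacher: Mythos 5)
Your overall architecture tracks the paper's: you correctly reduce the theorem to the failure of $C^3$ via Strehlke's asymptotic expansion, identify the eigenvalue $\lambda_2=-1/n$ as the decisive mode, and split the genericity claim into an openness (stability) part and a density part. The $C^2$ upper bound, attributed by the paper to Huisken rather than Colding--Minicozzi, is indeed not where the difficulty lies. However, both halves of your genericity argument have serious gaps.

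The density step is the critical failure. You propose to ``fix a large rescaled time $\tau_0$, perturb $\tilde{\mathbf M}_{\tau_0}$ by $\varepsilon\psi$, and then transport this perturbation back to the initial time along the smooth MCF for $t<0$.'' This is precisely the backward solution of a geometric heat equation, and it is ill-posed: the perturbed hypersurface at the late time $-e^{-\tau_0}$ does not lie on the original flow, so there is no well-posed evolution you can run in reverse to obtain an initial hypersurface that flows to it. The paper singles this out explicitly as the obstruction that makes the global (initial-perturbation) problem strictly harder than the local one. Its way around it is the Herrero--Vel\'azquez lemma: the \emph{forward} fundamental solution $\mathcal T(T,0)$ of the linearized equation $\partial_t v=L_{M_t}v$ has dense image in $L^2(M_T)$ (proved by backward \emph{uniqueness}, not backward existence, via Lions--Malgrange), so one can choose a small initial perturbation $v_0$ whose forward linear evolution lies arbitrarily close to the $\pi_2$-direction at time $T$, then upgrade this to the nonlinear flow via the quantitative closeness estimate of Proposition~\ref{prop:SX1prop3.2}. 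Your argument replaces this forward-and-approximate scheme with a backward-and-exact one that does not exist. Note also that because $\lambda_2$ is not the \emph{leading} eigenvalue, the authors' earlier positivity-based techniques (Li--Yau, Feynman--Kac) are unavailable here, which is exactly why the image-denseness lemma is needed; your proposal does not address this distinction at all.

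The openness step is under-justified and slightly overclaims. You assert that $\Sigma\mapsto\phi_2$ is continuous ``by parabolic well-posedness combined with the spectral gap.'' But $\phi_2$ is an asymptotic ($t\to\infty$) quantity and well-posedness only gives control on finite time intervals; nothing in your argument precludes the leading mode of a nearby flow jumping to some $k>2$. What the paper actually proves is weaker than continuity but sufficient: that the cone $\cK_\kappa$ around $\cX_+\oplus\cX_2$ is forward-invariant and contracting (Theorem~\ref{thm:invariant cone}), that the perturbed flow enters that cone by a fixed time (using Proposition~\ref{prop:LS}, a \L ojasiewicz--Simon-type estimate from Schulze, to control the two flows for all time), and that the $\cX_+$ part cannot dominate because it would blow up exponentially, contradicting convergence to the sphere. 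Relatedly, you say the modes $k=0,1$ are ``killed by the spacetime normalization'' as if this were free; the paper must construct a centering map, bound its size (Lemma~\ref{lem:controlT}), and then re-verify (Lemma~\ref{lem:consequence of dense}) that after applying this nonlinear modulation the $\pi_2$ projection still dominates. Without the invariant-cone machinery and the centering-map analysis, your openness claim is an assertion rather than a proof.
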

	
	The well-known elliptic regularity states that a solution to a linear elliptic equation in a domain with a smooth boundary must be smooth. However, in the case of the arrival time formulation of the LSF, which satisfies a \emph{degenerate} elliptic equation, the solution may not be smooth. Theorem \ref{thm:main} highlights that, in a generic sense, the arrival time function has the lowest possible regularity. This observation reveals some fundamental differences between linear and nonlinear equations.
	
	Our proof of Theorem \ref{thm:main} relies on a key discovery made by Strehlke \cite{St1}, which establishes a connection between the regularity of the LSF at a spherical singularity and the asymptotic dynamics of the RMCF. Theorem \ref{thm:main} is a consequence of the following theorem, which finds the generic behavior of MCFs with spherical singularities. Let $\mathcal H$ be the space of hypersurfaces in $\R^{n+1}$ where the MCFs starting from these hypersurfaces only generate a single spherical singularity endowed with the $C^r$ topology as above. It is clear that $\mathcal G\subset\mathcal H$. It is important to note that in the definition of $\cH$, we do not assume mean convexity.
	
	\begin{theorem}\label{thm:main2}
		There exists an open and dense subset $\mathcal{R}$ of $\mathcal{H}$ with the following significant property. For any MCF starting from a hypersurface in $\mathcal R$, the corresponding RMCF converges to the sphere $\bS^{n}:=S^n(\sqrt{2n})$ with convergence rate $e^{-t/n}$ as $t\to\infty$.
	\end{theorem}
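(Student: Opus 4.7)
The plan is to linearize the RMCF around $\bS^n := S^n(\sqrt{2n})$ and analyze the asymptotic dynamics mode by mode. After centering the parabolic rescaling at the correct singular spacetime point, the RMCF is eventually a normal graph $u(\cdot,\tau)$ over $\bS^n$ satisfying $\pr_\tau u = Lu + Q(u)$, where $L = \Delta_{\bS^n} + 1$: one has $|A|^2 + \tfrac12 = 1$ on $\bS^n$, and the drift term $\tfrac12\langle x,\nabla u\rangle$ vanishes since $x$ is normal to $\bS^n$. The spectrum of $L$ is $\la_k = 1 - k(k+n-1)/(2n)$ with eigenspaces equal to the degree-$k$ spherical harmonics. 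The only positive eigenvalues are $\la_0 = 1$ and $\la_1 = \tfrac12$; they correspond to the time-shift and spatial-translation symmetries of the MCF and can be eliminated by the correct choice of singular spacetime point. The first stable eigenvalue is $\la_2 = -1/n$, which matches the rate in the theorem.

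Once the $k\le 1$ modes are normalized away, $u(\cdot,\tau)$ decays exponentially and admits a spectral asymptotic expansion. This Lyapunov--Schmidt / invariant-manifold step at the hyperbolic fixed point $\bS^n$ follows the strategy of Colding--Minicozzi and Strehlke. The decay rate is then $e^{-|\la_k|\tau}$, where $\la_k$ is the least negative eigenvalue with nonvanishing coefficient in the expansion of $u$. This motivates the definition
\[
\cR := \{\Si\in\cH : \limsup_{\tau\to\infty} e^{\tau/n}\|u(\cdot,\tau)\|_{L^2} > 0\},
\]
so that on $\cR$ the $k=2$ mode dominates and the convergence rate is exactly $e^{-\tau/n}$, while on $\cH\setminus\cR$ the decay is strictly faster.

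Openness of $\cR$ will follow from the continuous dependence of the spectral asymptotic coefficients on initial data, together with openness of $\cH$ itself (which uses Colding--Minicozzi's characterization of $\bS^n$ as the unique variationally stable closed shrinker, so that a single spherical singularity persists under small $C^r$ perturbations). The main obstacle is density. Given $\Si\in\cH\setminus\cR$, I would construct small $C^r$ perturbations $\Si_s\in\cH$ whose $k=2$ coefficient is nonzero for $s\neq 0$. Since degree-$2$ spherical harmonics on $\bS^n$ are in bijection with traceless symmetric $(n+1)\times(n+1)$ matrices acting linearly on $\R^{n+1}$, the natural candidates are ellipsoidal deformations $\Si_s := \{x + sAx : x\in\Si\}$ with $A$ traceless symmetric. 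The delicate point is to show that the map from $A$ to the leading $k=2$ coefficient of the asymptotic expansion of the RMCF of $\Si_s$ is not identically zero; this amounts to a transversality / implicit-function argument exploiting that the linearized RMCF preserves spectral projections, so that a nonzero initial $k=2$ perturbation injects into the leading $\tau\to\infty$ mode. Ruling out simultaneous cancellation across all admissible ellipsoidal directions is, in my view, the technical crux of the whole theorem.
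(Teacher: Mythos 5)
Your linearized framework, eigenvalue bookkeeping, and the definition of $\cR$ as the set of initial data whose RMCF decays at exactly the rate $e^{-t/n}$ all match the paper's setup, and the split into openness $+$ density is also the paper's structure. But there are two serious gaps in the density half, and one in openness.

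\emph{Density.} You write that ``the linearized RMCF preserves spectral projections, so that a nonzero initial $k=2$ perturbation injects into the leading mode.'' This is only true for the linearization \emph{at the fixed point} $\bS^n$. The perturbation you propose is applied at time $\tau=0$, where $M_0=\Sigma$ is a general hypersurface in $\cH$, nowhere near a sphere, and the variational equation $\pr_\tau v = L_{M_\tau}v$ along the actual flow has no reason to respect the spherical-harmonic splitting at $\bS^n$. So a traceless ellipsoidal deformation of $\Sigma$ carries no a priori relation to the $k=2$ coefficient of the asymptotic expansion near the singularity; the nonlinear heat flow can smooth it out or rotate it into other modes. This is exactly the ``global vs.\ local'' obstruction the paper flags: one cannot solve the geometric heat equation backward. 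The paper's key tool here is the Herrero--Vel\'azquez result (Lemmas~\ref{LmHV},~\ref{lem:dense}) that the forward variational solution operator $\cT(t,0):L^2(M_0)\to Q(M_t)$ has \emph{dense image}, proved via the Lions--Malgrange backward uniqueness theorem for the adjoint equation. That is what lets one engineer an initial perturbation whose evolved profile hits the $\cX_2$ direction at a late time, and it is the step your transversality discussion does not supply.

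\emph{Centering.} You remark that the $k=0,1$ modes ``correspond to time-shift and spatial-translation symmetries and can be eliminated by the correct choice of singular spacetime point,'' but you never address what happens to that choice \emph{after} you perturb. The perturbed flow's singularity moves in spacetime, and the drift toward the unstable $\cX_+$ modes is generically dominant. The paper introduces a centering map $\cC_{u_0}$ and proves quantitative control of its size (Lemma~\ref{lem:controlT}, Lemma~\ref{lem:consequence of dense}); without this, the claim that the $k=2$ mode dominates after re-centering is unsupported.

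\emph{Openness.} ``Continuous dependence of the spectral asymptotic coefficients on initial data'' is not a theorem you can cite; the coefficients are defined as $t\to\infty$ limits and their continuity is precisely what must be proved. The paper establishes openness via the invariant cone theorem (Theorem~\ref{thm:invariant cone}): once the graph difference lies in the cone $\cK_\kappa$ around $\cX_+\oplus\cX_2$, it stays there with the opening angle contracting under the flow, and the centering map then pins down dominance of $\cX_2$ over $\cX_+$. That dynamical-systems mechanism, rather than abstract continuity, is what makes the argument close.

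In short: your local linear analysis is right, but the paper's two real contributions---the dense-image/backward-uniqueness lemma to propagate an initial perturbation to the singularity, and the centering map to mod out the unstable translational/dilational modes---are exactly the two ingredients your sketch is missing, and they are where the difficulty lives.
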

	
	Theorem \ref{thm:main2} provides a dynamic perspective. In our previous work \cite{SX1}, we proved that a non-spherical compact singularity can be avoided through a generic perturbation of the initial condition (see also \cite{CCMS}). Theorem \ref{thm:main2} concerns the generic behavior of MCFs with spherical singularities. The convergence rate $ e^{-t/n}$ in the theorem can be understood as follows. The linearized RMCF around the sphere $\bS^{n}$ leads to the operator $L = \Delta_{\mathbb{S}^{n}} + 1$, which has first three eigenvalues $1,1/2,-1/n$. The eigenfunctions of the eigenvalues $1$ and $1/2$ correspond to infinitesimal dilation and translations respectively, which are not essential for the study of MCF singularities. Therefore, the first nontrivial eigenvalue is $-1/n$, which gives rise to the exponential decay rate of $e^{-t/n}$. 
	
	We would like to emphasize that Theorem \ref{thm:main2} holds even when $n=1$, but Theorem \ref{thm:main} does not hold when $n=1$. In fact, Kohn-Serfaty \cite{KS} proved that the arrival time of curves in the plane is at least $C^3$.

	\subsection*{Idea of proof}
	Strehlke \cite{St1} established that the local behavior of the LSF near a spherical singularity is related to the asymptotics of the RMCF. In particular, if the RMCF is dominated by the third eigenfunction of the operator $L_{\mathbb{S}^n}$, then the LSF cannot have $C^3$ regularity. This corresponds to the slowest possible convergence rate, and we refer to such singularities as \emph{slow} singularities. Therefore, Theorem \ref{thm:main2} consists of two key statements: a stability theorem (Theorem \ref{thm:stability}) and a denseness theorem (Theorem \ref{thm:denseness}). The stability theorem asserts that a slow singularity is robust under initial perturbations. The denseness theorem indicates that a singularity that is not slow can be perturbed into a slow one by an arbitrarily small initial perturbation. To establish the denseness statement, it is necessary to demonstrate that after a generic perturbation, the RMCF will evolve toward the space spanned by the third eigenfunction.
	
	Indeed, either situation involves two key steps. The first step addresses a local problem, where the goal is to demonstrate that for a generic convex hypersurface that is sufficiently close to the sphere, the arrival time function is $C^2$ but not $C^3$. In other words, this step establishes that slow singularities are generic when we perturb the flow near the singularity.
	
	The second step tackles a global problem, aiming to show that for a generic \emph{initial} hypersurface, the arrival time function is $C^2$ but not $C^3$. This is equivalent to showing that slow singularities are generic when we perturb the initial data. However, the second step often presents challenges, particularly in demonstrating that a generic initial perturbation can propagate to the correct local perturbation observed in the first step. This difficulty arises because we cannot solve a (geometric) heat equation backwardly. Notably, there is a similarity between the current work and our previous works \cite{SX1, SX2}, where we prove that a generic initial perturbation makes the flow leave a nonspherical compact shrinker or a conical noncompact shrinker $\Sigma$ along the leading eigenfunction of $L_\Sigma$. In particular, for the second step mentioned above, we developed a Li-Yau estimate in \cite{SX1} and a Feynman-Kac estimate in \cite{SX2} to show a generic initial perturbation can propagate to a correct local perturbation. 
	
	However, a fundamental distinction arises within the framework of the present paper when considering the eigenvalue $-1/n$, as it is not the leading eigenvalue as in previous works. Therefore, neither the Li-Yau estimate of \cite{SX1} nor the Feynman-Kac representation of \cite{SX2} applies here, since in both cases, the proof relies strongly on the analysis of positive solutions of the linearized heat equation $\partial_t v=L_{M_t}v$ along the RMCF $M_t$, whereas the eigenfunction associated to the eigenvalue $-1/n$ here does not have a fixed sign. In light of this, we use a result established by Herrero-Vel\'azquez \cite{HV}, which shows that the fundamental solution to the linearized variational equation has a dense image. As a consequence, we can choose an initial perturbation to have a nontrivial projection to the eigenspace with eigenvalue $-1/n$ after long time evolution.
	
	Nevertheless, it is inevitable to have nontrivial projections onto the eigenspace with eigenvalues $1$ and $1/2$ as well. The subsequent major challenge lies in addressing the first two eigenfunctions, namely the constant function and $\langle V,\bn\rangle$ for any constant vector $V$. These eigenfunctions are particularly unstable, and a generic perturbation is more likely to drift toward these directions. Notably, these eigenfunctions are induced by dilations and translations of the flow. If the perturbed RMCF drifts towards these directions, the perturbed MCF would only be shifted in the spacetime. We will use a \emph{centering map} to modulo the dilations and translations. Then we show that after this modulation, the most generic direction corresponds to the third eigenfunction direction.

	The paper is structured as follows. Section \ref{S:Spherical singularity} provides background information on spherical singularities. In Section \ref{S:Local dynamics and invariant cone}, we review and introduce the general dynamic tools and their applications to mean curvature flow. Section \ref{S:Stability of slow singularities} focuses on demonstrating the stability of slow singularities under perturbations. In Section \ref{S:Slow singularities are generic}, we show that it is always possible to perturb a spherical singularity into a slow spherical singularity. Additionally, the paper includes three appendices that present technical analyses related to translations/dilations and the transplantation of graphs.
	
	\subsection*{Acknowledgement}
	The authors want to thank Professor Bill Minicozzi for bringing their attention to the reference \cite{St1}. J.X. is supported by the grant NSFC (Significant project No.11790273 and No. 12271285) in China. J.X. thanks for the support of the New Cornerstone investigator program and the Xiaomi Foundation. The work is completed when J.X. is visiting Sustech International Center for Mathematics, to whom J.X. would like to thank its hospitality.

	\section{Spherical singularity}\label{S:Spherical singularity}
	In this section, we present some fundamental aspects concerning the spherical singularity of mean curvature flow. Let $\{\bM_\tau\}_{\tau\in I}$ denote a mean curvature flow (MCF), and we employ $\cM\subset\R^{n+1}\times\R$ to represent the spacetime track of this flow. We focus exclusively on the case where $n>1$.
	
	Given a spacetime point $X=(x,\tau)\in\R^{n+1}\times\R$, a rescaled mean curvature flow (RMCF) zooming in at $X$ is defined by
	\[M_t=e^{t/2}(\mathbf M_{\tau-e^{-t}}-x).\]
	
	We say a spacetime point $(x,\tau)$ is a spherical singularity if the RMCF $\{M_t\}_{t\in[0,\infty)}$ zooming in at $(x,\tau)$ smoothly converges to the sphere $\bS^n:= S^n(\sqrt{2n})$. Alternatively, this indicates the existence of a $T_0>0$ such that for $t>T_0$, $M_t$ can be expressed as a graph of the function $u(\cdot,t)$ over $\bS^n$, and $\|u(\cdot,t)\|_{C^\ell}\to 0$ as $t\to\infty$ for any $\ell\in\mathbb N$.
	
	The linearized operator of the RMCF over $\bS^n$ is defined as
	$
	L=\Delta+1,
	$
	where $\Delta$ denotes the Laplace operator. The RMCF equation for the graph function $u$ can be expressed as
	\[
	\pr_t u=Lu+\cQ(u,\nabla u,\nabla^2 u),
	\]
	where $\cQ(u,\nabla u,\nabla^2 u)$ represents a term that is  quadratically small in $u$. For a detailed calculation of the quadratic remainder term, we refer interested readers to \cite[Appendix]{CM4}.
	On the sphere $\bS^n$, the eigenfunctions of the operator $L$ are well-known to be the restrictions of homogeneous harmonic polynomials from $\R^{n+1}$. These eigenfunctions correspond to the eigenvalues $\lambda_k=-\left(\frac{k(k+n-1)}{2n}-1\right)$, where $k=0,1,2,\dots$. Notably, we have $\lambda_0=1$, $\lambda_1=1/2$, and $\lambda_2=-1/n$. For $k\geq 2$, the eigenvalues $\lambda_k$ are negative. Specifically, the eigenfunction associated with eigenvalue $1$ is the constant function, while the eigenfunctions corresponding to eigenvalue $-1/2$ are obtained by restricting the coordinate functions $x_i|_{\bS^{n}}$ for $i=1,2,\dots,n+1$.
	
	We would like to remind the readers that our convention for eigenvalues follows the order $\lambda_0>\lambda_1>\lambda_2>\cdots>\lambda_k>\cdots$, with $\lambda_k\to-\infty$ as $k\to\infty$. It is important to note that this convention differs from the conventions used by Colding-Minicozzi and Strehlke. We have adopted this specific convention as it aligns more favorably with our dynamical approach to the problem at hand.
	
	The RMCF can be viewed as a dynamical system when the initial hypersurface is close to a sphere. The investigation of the local dynamics of RMCF near a closed shrinker was initiated in \cite{CM4}. In our work \cite{SX1}, we establish a local stable/unstable manifold theorem for RMCF near any closed self-shrinker. 
	
	Strehlke \cite{St1} studied the local dynamics of RMCF near a sphere and obtained the following asymptotic result of RMCF.
	
	\begin{theorem}[Theorem 2.1 and (4) in \cite{St1}]\label{thm:Thm2.1inSt1}
		Suppose the RMCF $\{M_t\}$ smoothly converges to $\bS^n$ and it can be written as a graph of function $u(\cdot,t)$ over $\bS^n$ for $t>T_0$. Then either $u\equiv 0$ or there exists $k\geq 2$ and a nonzero homogeneous harmonic polynomial $P_k$ of degree $k$, and $\sigma>0$, such that we have as $t\to\infty$ 
		\begin{equation}
			u(y,t)=e^{\lambda_k t}P_k(y)|_{\mathbb S^n}+O(e^{(\lambda_k-\sigma)t}).
		\end{equation}
	\end{theorem}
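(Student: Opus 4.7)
The natural approach is a spectral-dynamical analysis of the rescaled equation $\pr_t u = Lu + \cQ(u,\nabla u,\nabla^2 u)$ viewed as a semilinear parabolic equation on $\bS^n$. Since $L$ is self-adjoint with discrete spectrum $\{\la_k\}$ and eigenspaces $E_k$ spanned by restrictions of degree-$k$ harmonic polynomials, I would decompose $u(\cdot, t) = \sum_{k \geq 0} u_k(\cdot, t)$ with $u_k(\cdot, t) \in E_k$. The smooth convergence $u \to 0$ and the fact that $\cQ$ is quadratic in $u$ and its derivatives together imply that, asymptotically, each projection $u_k$ evolves close to its linear exponential $e^{\la_k t}$, with corrections controlled in terms of $\|u\|^2$.

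The plan proceeds in three steps. First, handle the unstable modes $k = 0, 1$: since $\la_0 = 1$ and $\la_1 = 1/2$ are positive, the ODEs $\dot a_k = \la_k a_k + \langle \cQ,\phi_k\rangle$ satisfied by the projections can be integrated \emph{backward} from $t=\infty$ by variation of constants, giving $a_k(t) = -\int_t^\infty e^{\la_k(t-s)}\langle \cQ,\phi_k\rangle(s)\,ds$; since $\cQ = O(\|u\|^2)$, this shows $\|u_0\| + \|u_1\| = O(\|u\|^2)$, so the unstable modes are driven by the nonlinearity and are strictly smaller than any nonvanishing pure exponential. Second, identify the leading stable mode via a Merle--Zaag-type argument: testing the equation against eigenfunctions and using the spectral gaps between consecutive $\la_k$, show that $\log \|u(\cdot,t)\|_{L^2}/t$ must converge to some $\la_k$ with $k \geq 2$ unless $u \equiv 0$. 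Once this $k$ is found, set $v(y,t) = e^{-\la_k t}u(y,t)$; the rescaled equation reads $\pr_t v = (L - \la_k)v + e^{-\la_k t}\cQ$, whose nonlinearity decays like $e^{\la_k t}\|v\|^2$. Uniform bounds on $v$ together with interior parabolic regularity yield a $C^\infty$ subsequential limit which must lie in $\ker(L - \la_k) = E_k$, i.e.\ be the restriction of a homogeneous harmonic polynomial $P_k$ of degree $k$; uniqueness of the limit follows by showing that the $E_k$-projection of $v$ cannot oscillate at leading order. Third, write $u = e^{\la_k t}P_k|_{\bS^n} + w$, derive the equation for $w$, and use the spectral gap $\la_k - \la_{k+1} > 0$ in a weighted energy estimate to conclude $\|w(\cdot,t)\|_{L^2} = O(e^{(\la_k - \sigma)t})$ for any $0 < \sigma < \la_k - \la_{k+1}$; higher $C^\ell$ norms then follow from parabolic smoothing.

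The main obstacle is the second step. Excluding the possibility that $\|u(\cdot,t)\|$ decays faster than every exponential requires a backward uniqueness / unique continuation argument for $\pr_t - L$, typically via Carleman-type estimates, to promote super-exponential decay into $u \equiv 0$. Excluding oscillation of the decay rate between two distinct eigenvalues is where the quadratic smallness of $\cQ$ is essential: nonlinear feedback can only produce contributions of strictly smaller size than the leading mode, so once a mode dominates at one time it should persist, but making this rigorous across all consecutive spectral bands and uniformly in time requires a careful iterative comparison of modal energies. Once these two issues are settled, the identification of the leading polynomial $P_k$ and the error estimate are largely standard parabolic perturbation theory.
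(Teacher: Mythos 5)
The paper does not prove this statement: it is quoted verbatim from Strehlke~\cite{St1} (his Theorem~2.1 and equation~(4)) as an external input, so there is no internal argument to compare against. Your outline---eigenfunction decomposition for $L=\Delta+1$, backward variation-of-constants for the unstable modes $k=0,1$ (the homogeneous part vanishes since $e^{-\lambda_k t}a_k(t)\to 0$ automatically when $\lambda_k>0$), a Merle--Zaag-type argument to pin the decay rate at a single $\lambda_k$ with $k\ge 2$, and a rescaled limit to extract $P_k$---is the standard strategy behind Strehlke's proof and its antecedents (Colding--Minicozzi~\cite{CM4}, Herrero--Vel\'azquez~\cite{HV}), so at the level of architecture the proposal is aligned with the cited work.

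You have, however, flagged exactly the two steps that carry the theorem's weight and left both as gaps. First, ruling out super-exponential decay requires a backward-uniqueness statement for $\partial_t-L-\tilde L(t)$, where $\tilde L(t)$ absorbs $\mathcal Q$ as a second-order operator whose coefficients decay with $u$; this needs a Carleman estimate adapted to time-dependent, degenerating coefficients and stated in the form ``decays faster than every $e^{\lambda_k t}$ implies $u\equiv 0$,'' not merely finite-time backward uniqueness. Second, excluding drift between adjacent spectral bands is an invariant-cone assertion of exactly the kind this paper proves for the single gap around $\lambda_2$ (Theorem~\ref{thm:invariant cone}); the general claim requires the analogous cone estimate at every consecutive pair $(\lambda_k,\lambda_{k+1})$ and a bootstrap showing the quadratic forcing from $\mathcal Q$ cannot eject the dominant projection from a sufficiently narrow cone. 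There is also an ordering issue in your Step~1: the pointwise-in-time bound $\|u_0\|+\|u_1\|=O(\|u\|^2)$ already presupposes the exponential decay you only obtain in Step~2, so the two must be run together inside the Merle--Zaag iteration rather than sequentially. As written, the proposal is a correct roadmap to the cited result, but the load-bearing estimates are acknowledged rather than supplied.
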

	
	This theorem can be interpreted as an asymptotic result of level set flow.
	
	\begin{theorem}[Theorem 1.1 in \cite{St1}]
		Let $t$ be a solution to the level set equation on a smooth bounded convex domain $\Omega\subset\R^{n+1}$
		which attains its maximum $T$ at the origin. Then either $\Omega$ is a round ball and $t=T-|x|^2/(2n)$ for $x\in\Omega$,
		or there exists an integer $k\geq 2$ and a nonzero homogeneous harmonic polynomial $P_k$ of degree $k$ for which $t$
		has, at the origin, the asymptotic expansion
		\begin{equation}\label{eq:LSF expansion}
			t(x)=T-\frac{|x|^2}{2n}+|x|^{k(k-1)/n}P_k(x)+O(|x|^{\sigma+k+k(k-1)/n})
		\end{equation}
		for some $\sigma>0$.
	\end{theorem}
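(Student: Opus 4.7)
The plan is to translate Theorem \ref{thm:Thm2.1inSt1}, which describes the asymptotic behavior of the RMCF graph over $\bS^n$, into the stated asymptotic expansion of the arrival time $t$ near the origin. Since $\Omega$ is a bounded convex domain, its level sets $\bM_\tau=\{t=\tau\}$ form a mean convex MCF, and the hypothesis that $t$ attains its maximum $T$ at $0$ forces the last singularity of the flow to occur at the spacetime point $(0,T)$. By Huisken's theorem that $\bS^n$ is the only closed singularity model for mean convex MCF, the rescaled flow $M_s=e^{s/2}\bM_{T-e^{-s}}$ converges smoothly to $\bS^n$, so Theorem \ref{thm:Thm2.1inSt1} applies. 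If the graph function $u\equiv 0$, then $M_s=\bS^n$ for all large $s$, hence $\bM_\tau$ is the round sphere of radius $\sqrt{2n(T-\tau)}$ centered at $0$; this yields the round ball case with $t(x)=T-|x|^2/(2n)$ exactly.

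In the remaining case I would parameterize $M_s$ as a radial graph: for a unit vector $\omega$, a point of $M_s$ has the form $(\sqrt{2n}+\wti u(\omega,s))\omega$, where by Theorem \ref{thm:Thm2.1inSt1} together with the $k$-homogeneity of $P_k$,
\begin{equation*}
\wti u(\omega,s)=(2n)^{k/2}e^{\la_k s}P_k(\omega)+O\bigl(e^{(\la_k-\sigma_1)s}\bigr)
\end{equation*}
for some $\sigma_1>0$. A point $x\in\bM_\tau$ at time $\tau=T-e^{-s}$ is $x=e^{-s/2}(\sqrt{2n}+\wti u(\omega,s))\omega$, so $\omega=x/|x|$ and the implicit relation $e^{-s}=|x|^2/\bigl[2n(1+\wti u/\sqrt{2n})^{2}\bigr]$ holds. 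Expanding the denominator geometrically,
\begin{equation*}
t(x)=T-e^{-s}=T-\frac{|x|^{2}}{2n}+\frac{|x|^{2}\wti u}{n\sqrt{2n}}+O\bigl(|x|^{2}\wti u^{2}\bigr).
\end{equation*}
To leading order $e^{-s}=|x|^2/(2n)$, so substituting into $\wti u$ and using the algebraic identity $-2\la_k=k(k-1)/n+k-2$, the middle term equals a fixed positive multiple of $|x|^{k(k-1)/n+k}P_k(x/|x|)=|x|^{k(k-1)/n}P_k(x)$ by the degree-$k$ homogeneity. Absorbing the constant into $P_k$ yields the asserted main term.

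The main technical issue is bookkeeping the remainders through the nonlinear inversion. The RMCF error $O(e^{(\la_k-\sigma_1)s})$ converts, via $e^{-s}\sim|x|^2/(2n)$, into $O(|x|^{-2\la_k+2\sigma_1})$; after multiplication by $|x|^2$ this becomes $O(|x|^{k+k(k-1)/n+2\sigma_1})$, matching the claimed remainder upon setting $\sigma=2\sigma_1$. The quadratic term $|x|^{2}\wti u^{2}$ carries exponent $2-4\la_k=2k(k-1)/n+2k-2$, which exceeds the leading correction exponent $k+k(k-1)/n$ by $k(k-1)/n+k-2\geq 2/n>0$ for all $k\geq 2$, so it falls into the remainder after possibly shrinking $\sigma$. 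The iterative corrections from the implicit dependence $e^{-s}=|x|^2/(2n)\bigl(1+O(\wti u)\bigr)^{-2}$ inside $\wti u$ only produce further factors of $1+O(\wti u)$ and thus contribute at the same quadratic order, handled by the same exponent comparison. Once these exponent inequalities are verified, the argument amounts to assembling the algebra; the only real subtlety is ensuring that the $\sigma$ appearing in the final expansion can indeed be chosen positive uniformly in $k\geq 2$, which follows from the bound $k(k-1)/n+k-2\geq 2/n$.
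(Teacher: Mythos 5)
The paper does not prove this statement --- it is quoted verbatim from Strehlke \cite{St1} and used as a black box; the authors' own work begins from the RMCF asymptotics of Theorem \ref{thm:Thm2.1inSt1}. Your derivation correctly reconstructs the translation between the two statements, which is precisely how Strehlke obtains it: one writes the point $x\in\bM_{T-e^{-s}}$ in terms of the radial graph of $M_s$ over $\bS^n$, inverts the implicit relation $e^{-s}=|x|^2/\bigl(\sqrt{2n}+\wti u\bigr)^2$ iteratively, and uses the eigenvalue identity $-2\la_k=k(k-1)/n+k-2$ together with the homogeneity $P_k(\omega)=|x|^{-k}P_k(x)$ to extract the main term. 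The exponent bookkeeping you carry out (RMCF error $\mapsto |x|^{k+k(k-1)/n+2\sigma_1}$, quadratic error $\mapsto |x|^{2-4\la_k}$, with gap $-2\la_k=k(k-1)/n+k-2\geq 2/n>0$) is correct and is the heart of the matter.

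Two small points you glide over. First, concluding that $u\equiv 0$ for $t>T_0$ forces $\bM_\tau$ to be a round sphere centered at $0$ for \emph{all} $\tau<T$, hence $\Omega$ a round ball, requires backward uniqueness for MCF (Kotschwar-type Carleman estimates); forward uniqueness alone only gives the sphere for $\tau$ close to $T$. Second, in the iterated substitution $e^{-s}=\tfrac{|x|^2}{2n}\bigl(1+O(\wti u)\bigr)$, the implicit time $s=s(x)$ depends on both $|x|$ and $\omega=x/|x|$, and one should justify that the fixed-point iteration converges uniformly in $\omega$; this is routine since $\wti u\to 0$ in $C^0$ as $s\to\infty$, but it belongs in a careful write-up. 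Neither affects the correctness of the approach, which matches Strehlke's.
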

	
	Let us now discuss the regularity of the arrival time function $t(x)$. Huisken \cite{H3} demonstrated that for an arrival time function $t(x)$ with a single spherical singularity, the function $t$ belongs to the class $C^2$. Sesum \cite{Se}, on the other hand, constructed examples illustrating that, in general, the function $t$ does not belong to $C^3$. It is worth noting that there is a conjecture suggesting that if $t(x)=T-\frac{|x|^2}{2n}+O(|x|^N)$, then $t$ should lie in $C^{N-1}$. This is mentioned in the Remark following Theorem 1.1 in \cite{St1}. By examining the expansion given in equation \eqref{eq:LSF expansion}, we observe that when $n\geq2$ and $k=2$, the arrival time function $t(x)$ is not $C^3$ at $x=0$.
	
	Translating this observation back to the RMCF, we can state the following proposition:
	
	\begin{proposition}
		Suppose the RMCF $\{M_t\}_{t\in[0,\infty)}$ smoothly converges to $\mathbb S^n$ and it can be written as a graph of function $u(\cdot,t)$ over $\mathbb S^n$ for $t>T_0$. If there exists a nonzero homogeneous harmonic polynomial $P_2$ of degree $2$, and $\sigma>0$, such that
		\begin{equation}\label{eq:slowconv}
			u(y,t)=e^{- t/n}P_2(y)|_{\mathbb S^n}+O(e^{(-1/n-\sigma)t}),
		\end{equation}
		then the associated LSF function is not $C^3$.
	\end{proposition}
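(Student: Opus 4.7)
The strategy is to convert the given RMCF asymptotic (\ref{eq:slowconv}) into a pointwise expansion of the arrival time function near the singular point, and then argue by contradiction that no cubic Taylor polynomial can match this expansion.

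To carry this out, I would first invoke Strehlke's LSF expansion theorem (Theorem 1.1 in \cite{St1}, quoted just above), which translates an RMCF asymptotic of the form $u(y,t) = e^{\lambda_k t} P_k(y)|_{\mathbb{S}^n} + O(e^{(\lambda_k - \sigma)t})$ into the arrival time expansion with principal correction $|x|^{k(k-1)/n} P_k(x)$. In the setting of (\ref{eq:slowconv}) we have $k=2$, $\lambda_2 = -1/n$, and $k(k-1)/n = 2/n$, so the arrival time function at the singular point (taken to be the origin) satisfies
\[
t(x) = T - \frac{|x|^2}{2n} + |x|^{2/n} P_2(x) + O(|x|^{2 + 2/n + \sigma'})
\]
for some $\sigma' > 0$.

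Next, I would assume for contradiction that $t$ is $C^3$ at the origin. By Taylor's theorem, together with the fact that the origin is a maximum of $t$, we may write $t(x) = T + Q_2(x) + Q_3(x) + o(|x|^3)$, where $Q_2$ and $Q_3$ are homogeneous polynomials of degrees $2$ and $3$. Matching the quadratic parts with the Strehlke expansion forces $Q_2(x) = -|x|^2/(2n)$, and the remaining identity reads
\[
Q_3(x) - |x|^{2/n} P_2(x) = o(|x|^3) - O(|x|^{2 + 2/n + \sigma'}).
\]
Since $P_2$ is a nonzero polynomial, one can pick a unit direction $\omega$ with $P_2(\omega) \neq 0$ and restrict to the line $x = r\omega$. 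For $n \geq 3$ the exponent $2 + 2/n$ is strictly less than $3$; dividing through by $r^{2+2/n}$ and letting $r \to 0^+$ immediately forces $P_2(\omega) = 0$, a contradiction. In the borderline case $n = 2$ the two exponents agree, but then $Q_3(r\omega) = r^3 Q_3(\omega)$ is odd in $r$ while $|r\omega|^{2/n} P_2(r\omega) = |r|^3 P_2(\omega)$ is even in $r$; applying the identity separately for positive and negative $r$ forces both $Q_3(\omega) = P_2(\omega)$ and $Q_3(\omega) = -P_2(\omega)$, again producing $P_2(\omega) = 0$ and contradicting the choice of $\omega$.

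The only delicate point is the boundary case $n = 2$, where the obstruction to $C^3$ regularity is not visible on a single ray but only through the parity mismatch between the two halves of a full line through the origin. Beyond this, the proof reduces to Strehlke's expansion combined with an elementary Taylor comparison, so no further analytic or dynamical input is required.
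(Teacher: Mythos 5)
Your proposal is correct and takes essentially the same route as the paper: the proposition is stated in the paper as a direct observation from Strehlke's arrival-time expansion (equation (2.2)) specialized to $k=2$, and your argument simply unpacks that observation by comparing the expansion against a hypothetical third-order Taylor polynomial. The one place you add genuine detail beyond what the paper records is the parity argument at $n=2$, where $2+2/n=3$ and the obstruction is the even function $|r|^3P_2(\omega)$ versus the odd $r^3Q_3(\omega)$ along a line through the origin; this is the right way to handle the borderline exponent and is consistent with, if more explicit than, the paper's terse remark.
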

	
	Equation \eqref{eq:slowconv} describes the slowest possible convergence rate. We say the RMCF is \emph{slow} if the asymptotics is given by \eqref{eq:slowconv}, and the corresponding singularity is referred to as a \emph{slow singularity}.
	
	The connection between Theorem \ref{thm:main} and Theorem \ref{thm:main2} is now evident. The proof of Theorem \ref{thm:main2} is divided into two key components. The first part is based on a stability statement, implying that a slow singularity remains unchanged under a small initial perturbation. The second part focuses on a denseness result, stating that if a singularity is not slow, an arbitrarily small initial perturbation can make it slow. These two aspects will be addressed in Section \ref{S:Stability of slow singularities} and Section \ref{S:Slow singularities are generic}, respectively.
	
	\section{Local dynamics and invariant cones}\label{S:Local dynamics and invariant cone}
	In this section, we study the local dynamics near a sphere. Let $\cX$ be the space $H^1(\bS^n)$ equipped with the $Q$-norm: for any $u\in H^1(\bS^n)$,
	\[\|u\|_{Q}=\left(\int_{\bS^n}(|\nabla u(x)|^2+\Lambda u(x)^2 )d\cH^n(x)\right)^{1/2},\]
	where we choose $\Lambda$ to be a positive number greater than the leading eigenvalue of $L$. The $Q$-norm was initially introduced by Colding-Minicozzi in \cite{CM4}. It can be verified that the $Q$-norm is equivalent to the $H^1$-norm. For simplicity, we will denote $\|\cdot\|=\|\cdot \|_Q$ in the following discussions.
	
	The space $\cX$ admits a natural $L^2$-splitting, given by $\cX=\cX_-\oplus\cX_2\oplus\cX_+$, where $\cX_+$ is the subspace spanned by eigenfunctions with eigenvalues $1$ and $1/2$, $\cX_2$ is the subspace spanned by eigenfunctions with eigenvalue $\lambda_2=-1/n$, and $\cX_-$ is the $L^2$-orthogonal complement. We employ $\pi_+$, $\pi_2$, and $\pi_-$ to denote the orthogonal projections onto $\cX_+$, $\cX_2$, and $\cX_-$, respectively.
	
	Given $\kappa>0$, we define the cones
	\[
	\cK^+_\kappa=\{u=(u_-,u_2,u_+)\in \cX_-\oplus\cX_2\oplus\cX_+: \kappa\|u_-+u_2\|\leq \|u_+\|\},
	\]
	\[
	\cK_\kappa=\{u=(u_-,u_2,u_+)\in \cX_-\oplus\cX_2\oplus\cX_+: \kappa\|u_-\|\leq \|u_++u_2\|\}.
	\]
	Geometrically $\cK^+_\kappa$ (respectively $\cK_\kappa$) is a cone around $\cX_+$ (respectively $\cX_+\oplus \cX_2$). The value of $\kappa$ determines the opening angle of the cone, such that larger $\kappa$ results in a narrower cone. These cones are known as the \emph{invariant cones} as they remain unchanged under the RMCF. In fact, these cones can be refined over time, becoming progressively narrower and narrower.
	
	\begin{theorem}\label{thm:invariant cone}
		For any $c>0$, there exist $\delta_0>0$ and $\bar\kappa=\bar\kappa(\dt_0)>0$ with the following significance. Suppose $u(t),v(t):\ \bS^n\times [0,m]\to \R,\ m\in \mathbb N,$ are graphical functions of solutions to the RMCF equation, then whenever $\|u(t)\|_{C^{2,\alpha}}\leq\delta_0$, $\|v(t)\|_{C^{2,\alpha}}\leq \delta_0$ for all $t\in [0,m]$, we have
		\begin{enumerate}
			\item If $u(0)-v(0)\in \cK^+_{\kappa}$ for $\kappa\in[c,\bar\kappa]$, then $u(m)-v(m)\in \cK^+_{e^{\frac{m}{4}}\kappa}\cup \cK^+_{\bar\kappa}$.
			\item If $u(0)-v(0)\in \cK_{\kappa}$ for $\kappa\in[c,\bar\kappa]$, then $u(m)-v(m)\in \cK_{e^{\frac{m}{n}}\kappa}\cup \cK_{\bar\kappa}$.
		\end{enumerate}
		Moreover, we have $\bar\kappa\to\infty$ as $\dt_0\to 0$.
	\end{theorem}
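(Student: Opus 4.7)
The plan is to use the standard invariant-cone technique from smooth dynamical systems, adapted to the graphical RMCF. Write $w(t) := u(t) - v(t)$; subtracting the two RMCF equations for $u$ and $v$ yields $\partial_t w = Lw + E$, where $E := \cQ(u, \nabla u, \nabla^2 u) - \cQ(v, \nabla v, \nabla^2 v)$. The quadratic nature of $\cQ$, combined with the hypothesis $\|u\|_{C^{2,\alpha}}, \|v\|_{C^{2,\alpha}} \leq \delta_0$, yields the Lipschitz-in-$w$ bound $\|E\|_{\cX} \leq C\delta_0 \|w\|_{\cX}$, which makes the nonlinearity small compared to the linear dynamics.

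The next step is to decompose $w = w_- + w_2 + w_+$ along the $L^2$-orthogonal spectral subspaces of $L$ and derive scalar differential inequalities using the spectral gaps (minimum eigenvalue $1/2$ on $\cX_+$, only eigenvalue $-1/n$ on $\cX_2$, maximum eigenvalue $\lambda_3 = -(n+6)/(2n)$ on $\cX_-$):
\begin{align*}
\tfrac{1}{2}\tfrac{d}{dt}\|w_+\|^2 &\geq \tfrac{1}{2}\|w_+\|^2 - C\delta_0\|w_+\|\|w\|, \\
\bigl|\tfrac{1}{2}\tfrac{d}{dt}\|w_2\|^2 + \tfrac{1}{n}\|w_2\|^2\bigr| &\leq C\delta_0\|w_2\|\|w\|, \\
\tfrac{1}{2}\tfrac{d}{dt}\|w_-\|^2 &\leq \lambda_3\|w_-\|^2 + C\delta_0\|w_-\|\|w\|.
\end{align*}

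For part (1), set $a := \|w_+\|$ and $b := \|w_- + w_2\| = (\|w_-\|^2+\|w_2\|^2)^{1/2}$. Using the inequalities above and the identity $\|w\| = (a^2 + b^2)^{1/2}$ one derives
\[ \tfrac{d}{dt}(a/b) \geq (a/b)\bigl[\tfrac{1}{2} + \tfrac{1}{n} - C\delta_0 (1 + a/b)\sqrt{1 + (a/b)^2}/(a/b)\bigr]. \]
Choose $\bar\kappa := c_0/\delta_0$ with $c_0 = c_0(c)$ sufficiently small so that the bracket is at least $1/4$ uniformly for $a/b \in [c, \bar\kappa]$. Then $a/b$ grows by a factor of at least $e^{m/4}$ on $[0,m]$ until it first reaches $\bar\kappa$. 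Evaluated at $a/b = \bar\kappa$ the bracket is still strictly positive, so $\cK^+_{\bar\kappa}$ is forward-invariant; combining the two regimes gives $w(m) \in \cK^+_{e^{m/4}\kappa} \cup \cK^+_{\bar\kappa}$. The scaling $\bar\kappa = c_0/\delta_0$ automatically yields $\bar\kappa \to \infty$ as $\delta_0 \to 0$.

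Part (2) is structurally identical with $a := \|w_+ + w_2\|$, $b := \|w_-\|$; the relevant spectral gap becomes $\lambda_2 - \lambda_3 = (n+4)/(2n) > 1/n$, yielding $(a/b)' \geq (a/b)/n$ inside the cone and hence the rate $e^{m/n}$. The main technical obstacle I anticipate is the rigorous proof of the nonlinear estimate $\|E\|_{\cX} \leq C\delta_0 \|w\|_{\cX}$ in the $Q$-norm: the pointwise quadratic bound is immediate, but absorbing the $\nabla^2 w$ contribution into the $H^1$-style $\cX$-norm requires interpolation against the $C^{2,\alpha}$ smallness hypothesis, and the cross-terms $\langle \pi_\bullet E, w_\bullet\rangle$ must be controlled with constants uniform in $\kappa$ as $a/b$ approaches the cone boundary $\bar\kappa$.
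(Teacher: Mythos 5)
Your proposal has the right high-level skeleton (spectral splitting of $w=u-v$, exponential sharpening of the cone opening), but the engine you propose to drive it — a continuous-time Gr\"onwall argument built on the Lipschitz estimate $\|E\|_{\cX}\le C\delta_0\|w\|_{\cX}$ — does not hold in the norm $\cX$, and this is not a technicality but the heart of the matter. Since $\cQ=\cQ(u,\nabla u,\nabla^2 u)$ is quasilinear in the \emph{second} derivatives, the difference $E=\cQ(u,\ldots)-\cQ(v,\ldots)$ contains a term of the schematic form $(a^{ij}(u,\nabla u)-\delta^{ij})\nabla^2_{ij}w$ with coefficients of size $O(\delta_0)$. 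The $\cX$-norm is the $Q$-norm, an $H^1$-type norm, so $\|E\|_{\cX}$ involves $\|\nabla^3 w\|_{L^2}$; there is simply no way to dominate $\|w\|_{H^3}$ by $\|w\|_{H^1}$, no matter how small $\delta_0$ is. (Interpolation against $\|w\|_{C^{2,\alpha}}\lesssim\delta_0$ does not rescue you: it gives $\|E\|_{H^1}\lesssim\delta_0\|w\|_{C^{2,\alpha}}^{1-\theta}\|w\|_{H^1}^{\theta}$, which is sublinear in $\|w\|_{\cX}$ and cannot feed a Gr\"onwall argument with a $\kappa$-uniform constant.) On the finite-dimensional pieces $\cX_+$ and $\cX_2$ you could integrate by parts against the fixed smooth test functions and close the estimate, but the $\cX_-$-inequality $\tfrac12\tfrac{d}{dt}\|w_-\|^2\le\lambda_3\|w_-\|^2+C\delta_0\|w_-\|\|w\|$ has no such escape: $\cX_-$ is infinite-dimensional and the derivative loss lands there.

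The paper avoids this entirely by using Colding--Minicozzi's Proposition 5.2 (quoted here as Proposition \ref{prop:CM}): instead of a differential inequality at each instant, one compares the nonlinear flow to the \emph{linear semigroup over a unit time step},
\[
\|(u(1)-v(1))-e^{L}(u(0)-v(0))\|_{Q}\le C\delta^{\eps/2}\,\|u(0)-v(0)\|_{L^2},
\]
and then iterates in $t=1,2,\ldots,m$. Note the norm downgrade on the right-hand side — $L^2$ rather than $Q$ — which encodes exactly the one unit of parabolic smoothing that your instantaneous ODI cannot access. After that substitution the rest of your cone computation (comparing $\|\pi_+(\cdot)\|$ with $\|\pi_-\!+\!\pi_2(\cdot)\|$ using $e^{-1/n}$ and $e^{1/2}$ as the per-step contraction/expansion factors, absorbing the $\eps_0$-error when $\kappa\le\bar\kappa$, and taking $\bar\kappa\sim\delta_0^{-\eps}$) goes through essentially as you describe, and your identification of the relevant spectral gaps, including $\lambda_3=-(n+6)/(2n)$ and the rates $e^{m/4}$ and $e^{m/n}$, is correct. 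So the fix is structural rather than cosmetic: replace the ODI by the iterated time-one CM estimate.
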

	
	We need the following crucial estimate in \cite{CM4}. This estimate implies that for two graphs over $\bS^n$ that are close to each other, after evolving for a time of $1$, they are still close to each other. In the subsequent expressions, $e^L$ denotes the time-one evolution operator associated with the heat equation $\partial_t u=Lu$.
	
	\begin{proposition}[Proposition 5.2 in \cite{CM4}]\label{prop:CM}
		There exist $\delta_1>0$, $\eps>0$ and $C>0$ with the following significance. Suppose $u(t)$ and $v(t)$ are graphical functions of solutions to the RMCF over the limiting closed shrinker for $t\in[0,1]$ and $\|u(0)\|_{C^{2,\alpha}}\leq \delta<\delta_1$, $\|v(0)\|_{C^{2,\alpha}}\leq \delta<\delta_1$. Then
		\[
		\|(u(1)-v(1))-e^L(u(0)-v(0))\|^2\leq C\delta^\eps\|u(0)-v(0)\|_{L^2}^2.
		\]
	\end{proposition}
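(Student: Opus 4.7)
The plan is to view $w := u - v$ as the solution of a linear parabolic equation with small forcing, and combine Duhamel's formula with parabolic smoothing and Gagliardo--Nirenberg interpolation to obtain the estimate. Subtracting the RMCF equations $\partial_t u = Lu + \cQ(u,\nabla u, \nabla^2 u)$ satisfied by $u$ and by $v$, the difference satisfies
\[
\partial_t w = Lw + E(t), \qquad E(t) := \cQ\bigl(u(t),\nabla u,\nabla^2 u\bigr) - \cQ\bigl(v(t),\nabla v,\nabla^2 v\bigr).
\]
Since $\cQ$ vanishes to second order at the origin, writing $\cQ(u) - \cQ(v) = 2B(v,w) + \cQ(w)$ with $B$ the polarization of $\cQ$, the hypothesis $\|u(t)\|_{C^{2,\alpha}},\|v(t)\|_{C^{2,\alpha}} \leq \delta$ gives the pointwise bound $|E(t)| \leq C\delta\bigl(|w| + |\nabla w| + |\nabla^2 w|\bigr)$ and hence $\|E(t)\|_{L^2} \leq C\delta\,\|w(t)\|_{H^2}$.

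Next, Duhamel's formula yields $w(1) - e^L w(0) = \int_0^1 e^{(1-s)L}\, E(s)\, ds$, and the heat-type smoothing $\|e^{\tau L}\|_{L^2 \to H^1} \leq C\tau^{-1/2} e^{\tau}$ (standard for $L = \Delta + 1$ on $\bS^n$) gives
\[
\|w(1) - e^L w(0)\| \leq C\delta \int_0^1 (1-s)^{-1/2}\, \|w(s)\|_{H^2}\, ds.
\]
The crucial step is to bound $\|w(s)\|_{H^2}$ by $\|w(0)\|_{L^2}$ with a small prefactor. Two a priori bounds are at hand: the hypothesis directly yields the Schauder-type bound $\|w(s)\|_{C^{2,\alpha}} \leq 2\delta$, and a standard linear parabolic energy estimate for $\partial_t w = Lw + E$ (viewed as a linear equation with $L^\infty$-bounded coefficients of size $O(\delta)$) gives $\|w(s)\|_{L^2} \leq Ce^s\|w(0)\|_{L^2}$.

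I would then split the Duhamel integral at $s=1/2$. On $[1/2,1]$, $L^2 \to H^2$ parabolic smoothing for the linear equation satisfied by $w$ yields $\|w(s)\|_{H^2} \leq C\|w(0)\|_{L^2}$; this holds up to a perturbative modification from the $O(\delta)$ coefficients, and contributes an integrable factor because $(1-s)^{-1/2}$ is integrable on $[1/2,1]$. On $[0,1/2]$, where $(1-s)^{-1/2}$ is bounded but no smoothing has yet taken effect, I would use Gagliardo--Nirenberg interpolation between $L^2$ and $C^{2,\alpha}$ (via $C^{2,\alpha} \hookrightarrow H^{2+\gamma}$ for some $\gamma\in(0,\alpha)$) to get $\|w(s)\|_{H^2} \leq C \|w(s)\|_{L^2}^\theta\,\delta^{1-\theta}$ for some $\theta\in(0,1)$. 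Substituting these two estimates into the Duhamel bound, squaring, and using $\|w(0)\|_{L^2} \leq C\delta$ to absorb the fractional $\|w(0)\|_{L^2}^\theta$ factor into $\|w(0)\|_{L^2}^2$ at the cost of some extra powers of $\delta$, yields $\|w(1) - e^L w(0)\|^2 \leq C\delta^\eps \|w(0)\|_{L^2}^2$ for an explicit $\eps>0$ depending on $\theta$ (and hence on $\alpha$ and $n$).

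The main obstacle is the regularity mismatch: the nonlinear correction $E$ involves two derivatives of $w$, whereas the semigroup smoothing $e^{\tau L}$ only recovers one derivative per factor of $\tau^{-1/2}$, so a naive Duhamel iteration is not contractive. The interpolation step resolves this by trading Schauder smallness for partial $H^2$-control, and the split at $s=1/2$ exploits parabolic smoothing in the second half of the interval, where it is not obstructed by the non-integrable $s^{-1}$ singularity near the origin. A secondary technical point is verifying that the $L^2\to H^2$ parabolic smoothing for the linear equation $\partial_t w = Lw + E$ is not spoiled by the $O(\delta)$-small coefficients of $E$; this is a standard perturbation argument, since for $\delta$ small enough the perturbed heat kernel satisfies the same Gaussian bounds as the unperturbed one, up to constants.
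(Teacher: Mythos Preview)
The paper does not give its own proof of this proposition; it is quoted verbatim as Proposition~5.2 of \cite{CM4} and used as a black box. So there is no ``paper's proof'' to compare against, and the question is simply whether your sketch is correct.

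There is a genuine gap in your treatment of the interval $[0,1/2]$. After Gagliardo--Nirenberg you obtain a contribution of size $C\delta^{2-\theta}\|w(0)\|_{L^2}^{\theta}$ with $\theta\in(0,1)$, and you then claim to ``absorb the fractional $\|w(0)\|_{L^2}^{\theta}$ factor into $\|w(0)\|_{L^2}^{2}$'' using $\|w(0)\|_{L^2}\le C\delta$. This step is wrong: writing $\|w(0)\|_{L^2}^{2\theta}=\|w(0)\|_{L^2}^{2\theta-2}\cdot\|w(0)\|_{L^2}^{2}$, the exponent $2\theta-2$ is negative, so the \emph{upper} bound $\|w(0)\|_{L^2}\le C\delta$ only gives a \emph{lower} bound on $\|w(0)\|_{L^2}^{2\theta-2}$. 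Concretely, if $\|w(0)\|_{L^2}=\delta^{N}$ with $N$ large (i.e.\ $u(0)$ and $v(0)$ are extremely close compared to their distance to the shrinker), your $[0,1/2]$ bound becomes $C\delta^{2(2-\theta)+2N\theta}$, which is \emph{not} $\le C\delta^{\eps}\cdot\delta^{2N}$ for any fixed $\eps>0$ once $N>(2-\theta)/(1-\theta)$. The proposition must hold uniformly for all such pairs $u(0),v(0)$, so this is a real failure, not a technicality.

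The underlying issue is that at $s=0$ you have no control on $\|w(0)\|_{H^2}$ proportional to $\|w(0)\|_{L^2}$; the only bound is $\|w(0)\|_{H^2}\le C\delta$ from the triangle inequality, and this is what your interpolation is really using. A route that does work is to put the highest-order piece $[a(v,\nabla v)-a(0)]\nabla_{ij}w$ of $E$ into divergence form $\nabla_i\bigl([a(v)-a(0)]\nabla_j w\bigr)$ plus a first-order remainder, run an $L^2$ energy estimate on $\phi:=w-e^{tL}w(0)$ (integrating by parts so that only $\|w\|_{H^1}$ enters, and using the parabolic maximal-regularity fact $\int_0^1\|e^{tL}w(0)\|_{H^1}^2\,dt\le C\|w(0)\|_{L^2}^2$) to obtain $\|\phi(t)\|_{L^2}^2\le C\delta\|w(0)\|_{L^2}^2$ for all $t\in[0,1]$, and only \emph{then} use semigroup smoothing on $[1/2,1]$ to upgrade $\|\phi(1/2)\|_{L^2}$ to $\|\phi(1)\|_{H^1}$. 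Your $[1/2,1]$ step is fine; it is the interpolation shortcut on $[0,1/2]$ that must be replaced.
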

	
	\begin{proof}[Proof of Theorem \ref{thm:invariant cone}]
		We shall now prove item (1), and the proof for item (2) follows similarly. First, we focus on the case $m=1$. Let $\eps_0$ be a sufficiently small number to be determined. From Proposition \ref{prop:CM}, by choosing $\delta_0<\delta_1$ to be sufficiently small, we obtain the following:
		\begin{equation}\label{eq:CM_est}
			\|(u(1)-v(1))-e^L(u(0)-v(0))\|^2\leq \eps_0^2\|u(0)-v(0)\|_{L^2}^2.
		\end{equation}
		If $u(0)-v(0)\in\cK^+_{\kappa}$, we have $\kappa\|(\pi_-+\pi_2)(u(0)-v(0))\|\leq \|\pi_+(u(0)-v(0))\|$. Since $e^L$ is a linear operator that simply scales the eigenvectors by the exponential of their eigenvalues, we have the following
		\[
		\|(\pi_-+\pi_2)(e^Lu(0)-e^Lv(0))\|\leq e^{-1/n} \|(\pi_-+\pi_2)(u(0)-v(0))\|,
		\]
		\[
		\|\pi_+(e^Lu(0)-e^Lv(0))\|\geq e^{1/2} \|\pi_+(u(0)-v(0))\|.
		\]
		
		Therefore, $e^L(u(0)-v(0))\in\cK^+_{e^{1/2}\kappa}$. Let $\eps_0$ in \eqref{eq:CM_est} sufficiently small, using the fact that $L^2$-norm is bounded by $H^1$-norm, we have 
		\[
		\begin{split}
			\|(\pi_-+\pi_2)(u(1)-v(1))\|
			\leq& 
			e^{-1/n} \|(\pi_-+\pi_2)(u(0)-v(0))\|+\eps_0\|u(0)-v(0)\|
			\\
			\leq&
			\left(e^{-1/n}\frac{1}{1+\kappa}+\eps_0\right)\|u(0)-v(0)\|
			,
		\end{split}
		\]
		\[
		\begin{split}
			\|\pi_+(u(1)-v(1))\|
			\geq&
			e^{1/2}\|\pi_+(u(0)-v(0))\|-\eps_0\|u(0)-v(0)\|
			\\
			\geq&
			\left(e^{1/2}\frac{\kappa}{1+\kappa}
			-\eps_0
			\right)\|u(0)-v(0)\|
			.
		\end{split}
		\]
		
		As a consequence,
		\[
		\|(\pi_-+\pi_2)(u(1)-v(1))\|
		\leq 
		\frac{e^{-1/n}+(1+\kappa)\eps_0}{e^{1/2}\kappa-(1+\kappa)\eps_0}\|\pi_+(u(1)-v(1))\|.
		\]
		Whenever $\eps_0\leq \frac{e^{1/4+1/n}\kappa}{(1+e^{1/4}\kappa)(1+\kappa)}$ or $e^{1/4}\kappa\geq\bar\kappa$, we have $u(1)-v(1)\in \cK^+_{e^{1/4}\kappa}$. Iterating the above process $m$ times gives the desired conclusion. 
		
		In the above discussion, we only require $\eps_0\leq \frac{e^{1/4+1/n}e^{\ell/4}\kappa}{(1+e^{1/4}e^{\ell/4}\kappa)(1+e^{\ell/4}\kappa)}$ holds for all $\ell\in[0,m]$, or $e^{\ell/4}\kappa\geq\bar\kappa$ for some $\ell\in[0,m]$. In particular, we can choose $\bar{\kappa}$ such that $\frac{\bar{\kappa}}{(1+2\bar\kappa)^2}\geq \eps_0\approx C\delta_0^\eps/2$ to make the proof works. This implies that if $\delta_0\to 0$, we can allow $\bar\kappa\to\infty$.
	\end{proof}
	
	Furthermore, we emphasize that Proposition \ref{prop:CM} can be viewed as an indication that the solution to the linearized variational equation approximates the RMCF equation in the $H^1$-norm. This approximation extends to the H\"older norm as well. Inspired by \cite[Section 4]{CM4}, we present the following proposition:
	
	\begin{proposition}[Proposition 3.2 in \cite{SX1}]\label{prop:SX1prop3.2}
		Given an RMCF $\{M_t\}_{t\in[0,\infty)}$ converging to a closed self-shrinker $\Sigma$ and given $T>0$, there exist $\delta_0>0$, $\sigma_0>0$ and $C>0$ depending on $T$, with the following significance: if $v_0\in C^{2,\alpha}(M_0)$ satisfies $\|v_0\|_{C^{2,\alpha}}\leq \delta<\delta_0$, suppose $v$ satisfies the RMCF equation with initial data $v_0$, and $v^*$ satisfies the linearized equation $\pr_t v^*=L_{M_t}v^*$ with initial data $v_0$. Then for all $t\in[0,T]$,
		\[\|(v-v^*)(\cdot,t)\|_{C^{2,\alpha}}\leq C\delta^{1+\sigma_0}.\]
	\end{proposition}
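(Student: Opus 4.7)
The plan is to derive a linear parabolic equation for $w := v - v^*$ and then apply parabolic Schauder estimates. Since $v$ satisfies the graphical RMCF equation $\partial_t v = L_{M_t} v + Q(v,\nabla v,\nabla^2 v)$, where $Q$ is the quadratic remainder described in Section \ref{S:Spherical singularity}, and $v^*$ satisfies the linearized equation with the same initial data, the difference solves
\[
\partial_t w - L_{M_t} w = Q(v,\nabla v,\nabla^2 v), \qquad w(\cdot,0)=0.
\]
So $w$ is the solution of an inhomogeneous linear parabolic equation with zero initial data and a forcing term that is quadratically small in $v$.

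The first step is an a priori estimate $\|v(\cdot,t)\|_{C^{2,\alpha}}\leq C_T \delta$ on $[0,T]$. Because $M_t$ converges smoothly to $\Sigma$, I would transplant everything onto the fixed shrinker $\Sigma$ through a tubular neighborhood (the appendices of the present paper carry out precisely such a transplantation). After this reduction $v$ solves a quasilinear parabolic equation on $\Sigma$ whose linearization at the origin is a smoothly $t$-dependent perturbation of $L_\Sigma$, and a standard continuity/bootstrap argument for small-data quasilinear parabolic equations propagates the $C^{2,\alpha}$-smallness over the fixed interval $[0,T]$, provided $\delta_0$ is small depending on $T$. The analogous bound on $v^*$ is immediate from linear parabolic theory.

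The second step exploits the structure of $Q$, which schematically reads $Q = A(v,\nabla v)\cdot \nabla^2 v + B(v,\nabla v)$ with $A$ vanishing linearly and $B$ vanishing quadratically at the origin (see \cite[Appendix]{CM4}). Combined with the a priori bound on $v$, this gives
\[
\|Q(v,\nabla v,\nabla^2 v)(\cdot,t)\|_{C^\alpha(M_t)} \leq C\,\|v(\cdot,t)\|_{C^{2,\alpha}(M_t)}^{1+\sigma_0}
\]
for some $\sigma_0>0$; the small $\sigma_0$ absorbs an interpolation loss incurred in estimating the Hölder seminorm of $A(v,\nabla v)\cdot\nabla^2 v$ by the full $C^{2,\alpha}$-norm of $v$.

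Finally, parabolic Schauder estimates applied to the linear equation for $w$ on $[0,T]$ — with coefficients of $L_{M_t}$ uniformly $C^\alpha$ because $M_t$ is smoothly converging to $\Sigma$ — yield $\|w(\cdot,t)\|_{C^{2,\alpha}}\leq C(T)\|Q\|_{C^\alpha([0,T])}\leq C(T)\delta^{1+\sigma_0}$, which is the desired conclusion. The main obstacle is the a priori $C^{2,\alpha}$-bound on $v$: the smallness of $v_0$ must be propagated over a fixed but possibly large time interval where the coefficients of the quasilinear equation depend on $t$ through $M_t$. Once this reduction to a quasilinear equation on the fixed manifold $\Sigma$ is set up cleanly, the remainder of the argument is a fairly standard application of parabolic Schauder theory.
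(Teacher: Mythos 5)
Your plan---write $w = v - v^*$, observe it solves an inhomogeneous linear equation with zero initial data and quadratically small forcing, then close with an a~priori bound on $v$ and parabolic Schauder---is sound and does prove the proposition. Note the paper does not give a proof here; it cites \cite{SX1}, and the discussion around Proposition \ref{prop:smooth-dependence-on-initial-data-1} indicates the argument there (following \cite[Section 4]{CM4}) proceeds by rescaling the RMCF back to an unrescaled MCF and invoking the Ecker--Huisken interior estimates of \cite{EH}, rather than by a direct Schauder estimate on the RMCF graph equation over a fixed time interval. Your direct route is equivalent for a fixed finite $T$ and is arguably cleaner, since it avoids the change of variables.

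Two points worth tightening. First, the remark that $\sigma_0$ ``absorbs an interpolation loss'' is unnecessary: with $Q = A(v,\nabla v)\cdot\nabla^2 v + B(v,\nabla v)$, $A$ vanishing to first order and $B$ to second order, the product rule for H\"older norms gives
\[
\|A\cdot\nabla^2 v\|_{C^\alpha}\leq \|A\|_{L^\infty}\|\nabla^2 v\|_{C^\alpha}+[A]_{C^\alpha}\|\nabla^2 v\|_{L^\infty}
\leq C\|v\|_{C^1}\|v\|_{C^{2,\alpha}}+C\|v\|_{C^{1,\alpha}}\|v\|_{C^2}\leq C\|v\|_{C^{2,\alpha}}^2,
\]
and likewise for $B$, so you may take $\sigma_0=1$; the $\sigma_0>0$ in the statement just allows slack. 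Second, the a~priori bound $\|v(\cdot,t)\|_{C^{2,\alpha}}\leq C_T\delta$ is indeed the crux and deserves a word more: $L_{M_t}$ has positive spectrum, so solutions of the linearized flow grow like $e^{t}$, and the constant $C_T$ is necessarily of order $e^{T}$; this forces $\delta_0$ to shrink with $T$ exactly as the proposition allows, and the continuity/bootstrap argument should be phrased so that the smallness threshold at each step accounts for this exponential growth. Finally, the remainder $Q$ depends on $t$ through $M_t$ (it is $Q_{M_t}$, not $Q_{\Sigma}$), but since $\{M_t\}_{t\in[0,T]}$ is a fixed smooth compact family converging to $\Sigma$, the structural constants in your estimate on $Q$ are uniform over $[0,T]$, so this does not affect the argument.
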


	Lastly, to verify the assumptions of the above Theorem \ref{thm:invariant cone}, we require a consequence of the \L ojasiewich-Simon inequality established by Schulze \cite{Sc}. In cases where the perturbed RMCF converges to the same self-shrinker, we aim to demonstrate that these two RMCFs remain close to each other throughout the process, which allows us to apply the invariant cone argument. The precise statement is as follows:
	
	\begin{proposition}\label{prop:LS}
		Suppose $\{M_t\}_{t\in[0,\infty)}$ is an RMCF converging to a closed self-shrinker $\Sigma$. Given $\eps_0>0$, there exists $\delta_1>0$ with the following significance. For any $v_0\in C^{2,\alpha}(M_0)$ with $\|v_0\|_{C^{2,\alpha}}<\delta_1$, suppose $\widetilde{M_t}$ is the RMCF starting from $\{x+v_0(x)\bn(x)\ |\ x\in M_0\}$, and converging to the same $\Sigma$, then $\widetilde{M_t}$ can be written as a graph of function $v(\cdot,t)$ over $M_t$ for all $t\geq 0$ satisfying $\|v(\cdot, t)\|_{C^{2,\alpha}}\leq\eps_0$.
	\end{proposition}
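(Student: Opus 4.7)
The plan is to split the time axis at some large $T_0$ and treat the two regimes separately: short-time by continuous dependence on initial data, long-time by Schulze's \L ojasiewicz--Simon inequality applied to the Gaussian area functional $F$ near the closed shrinker $\Sigma$.

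First, using that $M_t\to\Sigma$ smoothly, I would fix $T_0$ so large that $M_t$ lies in an $\eps_0/4$-$C^{2,\alpha}$-neighborhood of $\Sigma$ for every $t\geq T_0$. On the compact time interval $[0,T_0]$ the RMCF is a smooth fully nonlinear parabolic equation with uniformly bounded geometry along $\{M_t\}$, so standard continuous dependence on initial data yields a modulus of continuity $\omega(\eta)\to 0$ as $\eta\to 0$ such that $\|v_0\|_{C^{2,\alpha}}<\eta$ guarantees $\widetilde M_t$ is a normal graph over $M_t$ with $C^{2,\alpha}$-norm at most $\omega(\eta)$ throughout $[0,T_0]$. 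In particular, via the transplantation-of-graphs lemma promised in the appendix, $\widetilde M_{T_0}$ is also a normal graph over $\Sigma$ with $C^{2,\alpha}$-norm at most $\eps_0/4+\omega(\eta)$.

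For $t\geq T_0$ I would apply Schulze's \L ojasiewicz--Simon inequality to $\widetilde M_t$, which by hypothesis converges to $\Sigma$. The inequality produces the crucial integral bound
\[\int_{T_0}^\infty \|\partial_t \widetilde M_t\|_{L^2}\,dt \;\leq\; C\,|F(\widetilde M_{T_0})-F(\Sigma)|^{\theta}\]
for some $\theta\in(0,1/2]$, which together with interior parabolic Schauder estimates (the ambient curvature of $\widetilde M_t$ remaining uniformly bounded in a neighborhood of $\Sigma$) upgrades to a pointwise-in-time bound $\|\widetilde M_t-\Sigma\|_{C^{2,\alpha}}\leq C'\omega(\eta)^\beta$ for every $t\geq T_0$, where $\beta>0$ depends on the \L ojasiewicz exponent. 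Choosing $\delta_1$ small enough that both $\omega(\delta_1)$ and $C'\omega(\delta_1)^\beta$ are bounded by $\eps_0/4$, and combining with the closeness of $M_t$ to $\Sigma$ via the triangle inequality and the transplantation lemma in the reverse direction, yields $\|v(\cdot,t)\|_{C^{2,\alpha}}\leq \eps_0$ for all $t\geq 0$.

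The principal obstacle is the long-time segment: a priori nothing prevents the perturbed flow $\widetilde M_t$ from wandering far from $\Sigma$ before ultimately settling down, particularly because $L$ possesses unstable eigenvalues $1$ and $1/2$ so transient excursions along $\cX_+$ could in principle be large. Schulze's \L ojasiewicz--Simon inequality is precisely the tool that forbids this by producing a finite total $L^2$-length of the trajectory once the flow enters a small enough neighborhood of $\Sigma$. The accompanying technical issue is the bootstrap from this $L^2$ length estimate to uniform-in-$t$ $C^{2,\alpha}$ smallness, which should proceed by standard interior Schauder estimates now that the geometry of $\widetilde M_t$ is controlled.
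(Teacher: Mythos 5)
Your proposal is correct and follows essentially the same strategy as the paper: split the time axis at a large $T_0$, control $[0,T_0]$ by continuous dependence on initial data (the paper makes this quantitative via Proposition~\ref{prop:SX1prop3.2}, which compares the nonlinear flow to the linearized variational equation), and control $[T_0,\infty)$ using Schulze's \L ojasiewicz--Simon result~\cite{Sc}. The only stylistic difference is that the paper invokes the quantitative conclusion already proved in \cite{Sc} as a black box -- namely that any RMCF starting as a graph over $\Sigma$ with $C^{2,\alpha}$ norm below a threshold $\sigma_0$ remains graphical with a uniform polynomial decay $\|w(t)\|_{C^3}\leq c_3(1+t)^{-\alpha_m}$ -- and then applies this twice, once to $M_t$ and once to the perturbed flow $\widetilde M_t$, comparing the two small graphs over $\Sigma$ by the transplantation lemma; whereas you unpack Schulze's mechanism (finite $L^2$ arclength from the gradient inequality, then Schauder bootstrap to $C^{2,\alpha}$). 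Both routes are sound, and your identification of the genuine obstruction -- that the unstable modes in $\cX_+$ could a priori drive a large transient excursion, and that the hypothesis that $\widetilde M_t$ converges back to the \emph{same} shrinker $\Sigma$ combined with \L ojasiewicz--Simon is what forbids this -- is exactly the point of the proposition.

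Two small remarks to tighten your write-up if you wanted to make it fully rigorous. First, the \L ojasiewicz--Simon length bound only forbids the excursion once you know the flow is trapped in the neighborhood where the gradient inequality holds; a short continuation argument (if the flow ever exited, the accumulated $L^2$ length up to first exit would already exceed the bound) closes this loop. Second, when you pass the closeness to $\Sigma$ of both $M_t$ and $\widetilde M_t$ back to a graph $v(\cdot,t)$ of $\widetilde M_t$ over $M_t$, you do need the transplantation estimate of Theorem~\ref{Thm:Appendix closeness of graphs} in both directions, as the paper does; this is not automatic from a triangle inequality at the level of norms because the graph function depends nonlinearly on the base hypersurface.
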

	
	\begin{proof}
		We will choose some $T_0>0$, and prove the closeness of $\widetilde{M_t}$ and $M_t$ for the two cases $t<T_0$ and $t\geq T_0$ separately.
		
		By the proof of \cite[Theorem 1.1]{Sc}, there exists $\sigma_0>0$ with the following significance: suppose $N_t$ is an RMCF converging to $\Sigma$, and $N_0$ is a graph of function $w(0)$ over $\Sigma$ with $\|w(0)\|_{C^{2,\alpha}}<\sigma_0$, then $N_t$ is a graph of function $w(t)$ over $\Sigma$, with $\|w(t)\|_{C^3}\leq c_3 (1+t)^{-\alpha_m}$ as $t\to\infty$, for some constants $c_3>0$ and $\alpha_m>0$. We may further assume $\sigma_0<\eps_0$ where $\eps_0$ is stated in the proposition, and we suppose $\sigma_0$ is chosen so small such that we can apply the transplantation Theorem \ref{Thm:Appendix closeness of graphs} where $\eps$ in Theorem \ref{Thm:Appendix closeness of graphs} is smaller than $1/2$. As a consequence, there exists $T>0$ such that for $t\geq T$, $M_t$ is a graph of function $u(t)$ over $\Sigma$, such that $\|u(t)\|_{C^{3}}<c_3 (1+t-T)^{-\alpha_m}$. Then we choose $T_0>T$ such that $c_3 (1+T_0-T)^{-\alpha_m}<\sigma_0/4$.
		
		For this $T_0$, we apply Proposition \ref{prop:SX1prop3.2} to see that for $\delta_1<\delta_0$ sufficiently small, if $\|v_0\|_{C^{2,\alpha}}<\delta_1$, for $t\in[0,T_0]$,
		\[
		\|v(t)\|_{C^{2,\alpha}}\leq C\delta_1^{1+\sigma_0}+C'\delta_1,
		\]
		where $C'\delta_1$ comes from $\|v^*(t)\|_{C^{2,\alpha}}$ and we notice that $v^*$ is a solution to the linear equation in Proposition \ref{prop:SX1prop3.2}. In particular, if we choose a sufficiently small $\delta_1$, we have $\|v(T)\|_{C^{2,\alpha}}\leq \sigma_0/4$.
		
		Then we can write $\widetilde{M_T}$ as a graph of the function $\tilde v(T)$ over $\Sigma$, and we have $\|\tilde v(T)\|_{C^{2,\alpha}(\Sigma)}\leq \sigma_0$ from the transplantation Theorem \ref{Thm:Appendix closeness of graphs}. This implies that $\|\tilde v(t)\|_{C^{3}}<c_3 (1+t-T)^{-\alpha_m}$. In particular, $\|\tilde v(T_0)\|_{C^{2,\alpha}}<\sigma_0/4$, which also implies that $\|v(t)\|_{C^{2,\alpha}}<\sigma_0<\eps_0$ for $t\geq T_0$. Combining all the discussions above we conclude the proof.
	\end{proof}
	
	Throughout the rest of the paper, we will select the perturbation to be sufficiently small, such that its $C^{2,\alpha}$-norm is smaller than $\delta_0$ in the preceding proposition.

	\section{The stability of slow singularities}\label{S:Stability of slow singularities}
	
	In this section, we prove the stability of slow singularities under small perturbations in the initial data. Specifically, we prove the following theorem:
	
	\begin{theorem}[The stability theorem]\label{thm:stability}
		Suppose $\{\bM_\tau\}_{\tau\in[-1,0)}$ is an MCF with a unique spherical singularity at the spacetime point $(0,0)$, which is slow. Then there exists $\eps_0>0$ such that for $u\in C^{2,\alpha}(\bM_0)$ with $\|u\|_{C^{2,\alpha}}<\eps_0$, the MCF starting from $\{x+u(x)\bn(x)\ |\ x\in \bM_0\}$ also has a unique spherical singularity, and the singularity is slow.
	\end{theorem}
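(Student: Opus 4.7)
The plan is to combine the invariant-cone machinery of Theorem \ref{thm:invariant cone} with the observation that rescaling at the spacetime extinction point of the perturbed flow automatically annihilates the $\cX_+$-modes, which correspond to translations and dilations. I would first establish existence of a unique spherical singularity for the perturbed MCF near $(0,0)$. On any interval $[-1,\tau_1]$ with $\tau_1<0$, continuous dependence on initial data gives $C^{2,\alpha}$-closeness of $\tilde{\bM}_\tau$ and $\bM_\tau$. Choosing $\tau_1$ close enough to $0$ so that $M_{-\log|\tau_1|}$ is already a small $C^{2,\alpha}$-graph over $\bS^n$, the slice $\tilde{\bM}_{\tau_1}$ is likewise a small graph over the same sphere, hence convex, and contracts to a point $\tilde x_0$ at some time $\tilde\tau_0$ close to $0$. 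Proposition \ref{prop:LS} then represents the rescaled flow $\tilde M_t = e^{t/2}(\tilde{\bM}_{\tilde\tau_0 - e^{-t}} - \tilde x_0)$ as a graph $\tilde v(\cdot,t)$ over $\bS^n$ with $\|\tilde v(t)\|_{C^{2,\alpha}}\le\eps_0$ uniformly in $t$.

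Because we rescale at the true extinction point, the $(n{+}2)$-dimensional $\cX_+$-eigenspace is annihilated quantitatively: the constant mode (eigenvalue $1$, generated by perturbing $\tilde\tau_0$, which dilates the limiting sphere) and the coordinate modes (eigenvalue $1/2$, generated by perturbing $\tilde x_0$, which translates the limiting sphere) both have amplitudes decaying strictly faster than $e^{-t/n}$. This is the content of the ``centering map'' from the introduction, handled in detail in the appendices on translations and dilations, and I would take its conclusion $\|\pi_+\tilde v(t)\|=o(e^{-t/n})$ as an input. With this in hand, I would compare $\tilde v$ to the unperturbed $v$ via $w=\tilde v-v$ and apply Theorem \ref{thm:invariant cone}(2). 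Using Proposition \ref{prop:SX1prop3.2} on a long finite interval and the spectral gap $\lambda_3=-\tfrac12-\tfrac3n<-\tfrac1n$, the $\cX_-$-component of $w$ decays at least at the linear rate $e^{\lambda_3 t}$ up to quadratic corrections, while $\pi_2 v(t)\sim e^{-t/n}P_2$ persists. At a sufficiently large time $T$ we thus have $w(T)\in\cK_\kappa$ for some $\kappa>0$; combined with $v(T)\in\cK_{\bar\kappa}$ (which follows from the slow expansion of $v$), this places $\tilde v(T)=v(T)+w(T)$ in $\cK_{\kappa'}$ for some $\kappa'>0$. Cone invariance then yields $\tilde v(T{+}m)\in\cK_{e^{m/n}\kappa'}$ for all $m\ge 0$, so $\|\pi_-\tilde v(t)\|$ decays strictly faster than $\|(\pi_++\pi_2)\tilde v(t)\|$; combined with the centering input this forces $\|\pi_-\tilde v(t)\|=O(e^{(-1/n-\sigma')t})$ for some $\sigma'>0$. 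The remaining leading term $\pi_2\tilde v(t)=e^{-t/n}(P_2+\pi_2 u+o(1))$ is nonzero for $\eps_0$ small, so $\tilde v$ obeys the slow expansion \eqref{eq:slowconv}.

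The main obstacle is the centering step: determining the extinction spacetime $(\tilde x_0,\tilde\tau_0)$ as a nonlinear fixed-point problem coupled to the entire flow, and extracting the quantitative decay $\|\pi_+\tilde v(t)\|=o(e^{-t/n})$ (not merely the qualitative smallness $\pi_+\tilde v(t)\to 0$) required to feed into the cone argument. Equivalently, one must verify that the centered perturbed RMCF is tangential to the $\cX_-\oplus\cX_2$-directions to an order strictly faster than $e^{-t/n}$, which demands uniform control of the quadratic remainder $\cQ$ throughout the iteration; this is where the smallness of $\eps_0$ enters most critically.
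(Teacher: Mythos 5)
Your overall strategy matches the paper's: center the perturbed flow at its true singular spacetime point, write the centered RMCF as a graph over $\bS^n$, and run the invariant-cone machinery of Theorem \ref{thm:invariant cone} to show the graph function eventually lives in a narrow cone around $\cX_2\oplus\cX_+$. However, there are two genuine gaps in the way you treat the $\cX_+$-modes and the final asymptotics.

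First, you take $\|\pi_+\tilde v(t)\|=o(e^{-t/n})$ as a black-box input, attributing it to the ``centering map'' appendix. That appendix (Lemma \ref{lem:A-Taylor} and Lemma \ref{lem:consequence of dense}) only controls how a single translation/dilation acts on a graph function; it does not assert any decay rate for $\pi_+\tilde v$, and in fact it is used for the denseness theorem, not stability. The paper derives the subdominance of $\pi_+$ quite differently: after centering, the RMCF converges to $\bS^n$, so $\|\tilde v(t)\|\to 0$; if $\tilde v$ (or the graph $w$ over $\bS^n$) were ever to lie in $\cK^+_\eps$ for small $\eps$, Theorem \ref{thm:invariant cone}(1) would trap it there and force $\|\pi_+ w(t)\|$ to grow like $e^{t/2}$, contradicting convergence. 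This contradiction argument is the actual mechanism; assuming the conclusion as an input is circular. (One could alternatively appeal directly to Strehlke's Theorem \ref{thm:Thm2.1inSt1}: once the centered RMCF is known to converge to $\bS^n$, the asymptotic leading term is $e^{\lambda_k t}P_k$ with $k\geq 2$, which already kills $\pi_+$; but you invoke neither route.)

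Second, the cone argument by itself gives only the relative statement that $\pi_2 w$ eventually dominates $\pi_- w$ and $\pi_+ w$; it does not produce the precise expansion \eqref{eq:slowconv} of the form $e^{-t/n}P_2+O(e^{(-1/n-\sigma)t})$. In the paper this last step is the application of Theorem \ref{thm:Thm2.1inSt1}: Strehlke's dichotomy forces the asymptotics to be governed by a single eigenmode $e^{\lambda_k t}P_k$, and the cone containment (together with $w\not\equiv 0$) pins down $k=2$. Your sketch never invokes Strehlke's theorem, so you never actually land in the ``slow'' form. Relatedly, the formula $\pi_2\tilde v(t)=e^{-t/n}(P_2+\pi_2 u+o(1))$ is not meaningful: $u$ is a function on $\bM_0$ at time zero, and there is no linear transport of it to a $\pi_2$-coefficient in the long-time asymptotics; the correct statement, obtained via transplantation and continuity in the paper, is that the graph $w(T)$ over $\bS^n$ is $Q$-close to $u(T)\approx e^{-T/n}P_2$, hence lies in $\cK_{\kappa/2}$, which is then propagated by cone invariance.

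Your opening step (existence of a nearby convex slice shrinking to a point, hence a unique spherical singularity) and the comparison via Proposition \ref{prop:SX1prop3.2} and Proposition \ref{prop:LS} on a long finite interval are consistent with what the paper does. The gap is concentrated in how you dispose of $\cX_+$ and in the missing appeal to Strehlke's asymptotic dichotomy.
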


	In the rest of this section, we shall focus on a fixed MCF $\{\bM_\tau\}_{\tau\in[-1,0)}$ with a slow singularity, along with its corresponding RMCF $\{M_t\}_{t\in[0,\infty)}$.
	
	The local stability of spherical singularities was proved by Huisken. In \cite{H1}, Huisken showed that any convex mean curvature flow in $\R^{n+1}$ must converge to a point, which is a spherical singularity.
	
	If the perturbation applied to the initial data is sufficiently small, the perturbed flow will remain close to the original flow for an extended period. The following proposition is a consequence of Proposition \ref{prop:SX1prop3.2}.
	
	\begin{proposition}\label{prop:smooth-dependence-on-initial-data-1}
		For any RMCF $\{M_t\}_{t\in[0,\infty)}$, any $T_0>0$ and $\eta>0$, there exists $\eps_1>0$ such that for any $u_0\in C^{2,\alpha}(M_0)$ with $\|u_0\|_{C^{2,\alpha}(M_0)}<\eps_1$, the perturbed RMCF starting from $\{x+u_0(x)\bn(x)\ |\ x\in M_0\}$ can be written as a graph of function $u(\cdot,t)$ over $M_t$ for $t\in[0,T_0]$, and $\|u(\cdot,T_0)\|_{C^{2,\alpha}(M_{T_0})}<\eta$.
	\end{proposition}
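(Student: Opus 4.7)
The plan is to reduce this proposition directly to Proposition \ref{prop:SX1prop3.2}. Given $T_0$ and $\eta$, apply Proposition \ref{prop:SX1prop3.2} with $T=T_0$ to obtain constants $\delta_0$, $\sigma_0$ and $C$ depending on $T_0$. For an initial perturbation $u_0$ with $\|u_0\|_{C^{2,\alpha}}=\delta\leq\eps_1<\delta_0$, denote by $u(\cdot,t)$ the graphical function of the perturbed RMCF over $M_t$ (provisionally, while it exists and is graphical) and by $u^*(\cdot,t)$ the solution of the linearized equation $\pr_t u^*=L_{M_t}u^*$ with the same initial data $u_0$. Proposition \ref{prop:SX1prop3.2} then gives
\[
\|(u-u^*)(\cdot,t)\|_{C^{2,\alpha}(M_t)}\leq C\,\eps_1^{1+\sigma_0}\qquad\text{for all }t\in[0,T_0].
\]

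For the linear piece, the RMCF $\{M_t\}_{t\in[0,T_0]}$ is smooth on the bounded interval $[0,T_0]$, staying at bounded distance from the singular time, so the coefficients of $L_{M_t}$ (namely the induced metric, second fundamental form, and the drift $-\tfrac12\langle x,\nabla\cdot\rangle$) have uniform $C^\alpha$ bounds. Standard Schauder estimates for a linear parabolic equation on a smooth family of surfaces then yield a constant $C_1=C_1(T_0)$ with
\[
\|u^*(\cdot,T_0)\|_{C^{2,\alpha}(M_{T_0})}\leq C_1\eps_1.
\]
Combining the two estimates gives $\|u(\cdot,T_0)\|_{C^{2,\alpha}}\leq C_1\eps_1+C\eps_1^{1+\sigma_0}$, which is smaller than $\eta$ once $\eps_1$ is chosen small in terms of $T_0$, $\eta$, $C_1$, $C$ and $\sigma_0$.

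The one genuine obstacle is that Proposition \ref{prop:SX1prop3.2} presupposes that the perturbed RMCF is a graph $u(\cdot,t)$ over $M_t$ throughout $[0,T_0]$, which is itself part of what we wish to conclude. I would close this gap by a continuity/bootstrap argument: let $T^*\leq T_0$ be the supremum of times $t\in[0,T_0]$ for which a $C^{2,\alpha}$ graphical representation over $M_s$ exists on $[0,t]$ with $\|u(\cdot,s)\|_{C^{2,\alpha}}\leq\eta$. On $[0,T^*]$ the estimates above apply and produce $\|u(\cdot,T^*)\|_{C^{2,\alpha}}\leq C_1\eps_1+C\eps_1^{1+\sigma_0}<\eta/2$, strictly below the graphicality threshold; short-time existence for MCF together with the transplantation tool (Theorem \ref{Thm:Appendix closeness of graphs}, already used in the proof of Proposition \ref{prop:LS}) then extends the graph representation a little past $T^*$, forcing $T^*=T_0$ and finishing the proof.
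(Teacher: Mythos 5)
The paper gives no explicit proof of this proposition: it simply states ``The following proposition is a consequence of Proposition~\ref{prop:SX1prop3.2}.'' Your argument is precisely the intended one, fleshed out with the three natural ingredients (controlling $u-u^*$ via Proposition~\ref{prop:SX1prop3.2}, controlling the linear piece $u^*$ by parabolic Schauder estimates on the smooth compact family $\{M_t\}_{t\in[0,T_0]}$, and closing the graphicality loop by a continuity/open-closed argument using Theorem~\ref{Thm:Appendix closeness of graphs}); the only cosmetic remark is that you use $\eta$ simultaneously as the target bound at $T_0$ and as the graphicality threshold in the bootstrap, which is harmless since one may without loss of generality shrink $\eta$ below the uniform graphicality threshold for $\{M_t\}_{t\in[0,T_0]}$.
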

	
	The same argument applies to the MCF if we scale it back (in fact, the proof in \cite{CM4} uses an argument by Ecker-Huisken for MCF in \cite{EH}, where rescaling the RMCF to an MCF is necessary). Consequently, if we perturb the initial data of an MCF that converges to a spherical singularity, the perturbed MCF would eventually become convex after a sufficiently long time. Therefore, the perturbed flow must also have a spherical singularity.
	
	The following proposition is a result from geometric measure theory, which can be found in \cite[Section 10.1]{Wh4}.
	
	\begin{proposition}\label{prop:trans-dilat-sing}
		Let  $\{\cM_i\}_{i=1}^\infty$ be a sequence of MCF spacetime with a single spherical singularity $X_i\in\cM_i$, and $\cM$ be an MCF spacetime with a single spherical singularity $X\in\cM$. Suppose $\cM_i\to\cM$ in the sense of measure as $i\to\infty$. Then we have $X_i\to X$.
	\end{proposition}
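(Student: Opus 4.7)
The natural approach is a contradiction argument built on the upper semi-continuity of Gaussian density under measure-theoretic (Brakke) convergence of mean curvature flows. Suppose $X_i\not\to X$. Passing to a subsequence, I may assume $X_i\to Y$ for some spacetime point $Y$ (a priori escape to infinity is ruled out because the singular times stay bounded: the initial hypersurfaces enclosed in a common compact region and the spacetime supports $\spt\cM_i$ converge in measure to $\spt\cM$, so the singular points $X_i$ lie in a bounded region of spacetime for $i$ large). I want to derive $Y=X$.

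The key fact is that at a spherical singularity the Gaussian density equals the entropy of the sphere, $\Theta(\cM_i,X_i)=\lambda(\bS^n)>1$, by Huisken's monotonicity formula applied to the RMCF converging smoothly to $\bS^n$. Upper semi-continuity of Gaussian density along measure-convergent sequences of Brakke flows (which is a direct consequence of the monotonicity formula: integrate the backward heat kernel over a small parabolic cylinder, pass to the limit on the smooth side, and let the radius shrink) then yields
\[
\Theta(\cM,Y)\geq \limsup_{i\to\infty}\Theta(\cM_i,X_i)=\lambda(\bS^n)>1.
\]
Hence $Y$ is a singular point of $\cM$. Since by assumption $\cM$ has a unique singularity $X$, we conclude $Y=X$, contradicting $X_i\not\to X$.

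To justify that $\Theta(\cM,Y)>1$ forces $Y$ to be singular, I would invoke White's local regularity theorem: at any spacetime point of a Brakke flow where the Gaussian density is sufficiently close to $1$, the flow is smooth on a parabolic neighborhood. Combined with the strict lower bound $\lambda(\bS^n)>1$, this rules out $Y$ being a regular point.

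The main obstacle here is purely notational: one must ensure that the meaning of ``convergence in measure'' for the spacetime tracks $\cM_i\to\cM$ is strong enough to guarantee Brakke convergence on a fixed parabolic neighborhood of any accumulation point $Y$, so that the monotonicity formula can be passed to the limit. This is standard in White's framework, and was used at the level referenced in \cite{Wh4}; no new analytic input is required beyond quoting it. Once this foundational step is in place, the density comparison above closes the argument immediately.
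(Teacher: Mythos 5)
Your argument is essentially identical to the paper's: both extract a convergent subsequence $X_i\to Y$ using compactness of the spacetime tracks, invoke upper semi-continuity of Gaussian density under measure convergence to show $Y$ is a singular point with density at least $\lambda(\bS^n)>1$, and then conclude $Y=X$ from the assumed uniqueness of the singularity of $\cM$. The only cosmetic differences are that the paper cites the avoidance principle to bound the $X_i$ while you appeal to the boundedness of the initial hypersurfaces and the convergence of supports, and that you additionally make explicit the use of White's local regularity theorem to rule out $Y$ being a regular point.
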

	
	\begin{proof}
		Since $\cM_i\to\cM$, we can assume that all $\cM_i$ and $\cM$ are bounded by a sufficiently large ball. Then, the avoidance principle of mean curvature flow implies that the points $X_i$ lie in a compact subset of spacetime. Therefore, a subsequence of ${X_i}$ converges to a limit point $X$. The upper-semi-continuity property of Gaussian density, as shown in the equation before Proposition 10.6 in \cite{Wh4}, demonstrates that $X$ is a spherical singularity of $\cM$. According to Huisken's theorem, $X$ must be the unique singularity of $\cM$. Hence, the entire sequence ${X_i}$ converges to this unique limit point $X$.
	\end{proof}
	
	Now we are going to study the influence of perturbation on singularity. A defect of RMCF is that it can only detect a single spacetime point, and if the singularity is moved in the spacetime, the RMCF fails to provide relevant information. To avoid this defect, we introduce a map called \emph{centering map}.
	
	\begin{definition}[Centering map]
		Let $\{M_t\}$ be an RMCF that converging to a sphere and $u_0\in C^{2,\alpha}(M_0)$ be a function with sufficiently small $C^{2,\alpha}$ norm. A composition of a translation and dilation $\cC_{u_0}$ of $\R^{n+1}$ is called a \emph{centering map} if the MCF starting from $\cC_{u_0}(\{x+u_0(x)\bn(x)\ |\ x\in M_0\})$ has a spherical singularity at the same spacetime position as the MCF corresponding to $\{M_t\}$.
	\end{definition}
	
	In other words, the centering map $\cC_u$ enables us to maintain the spacetime position of the spherical singularity. We equip $\cC_u$ with the norm in the space of translations and dilations.
	
	By employing a contradiction argument along with Proposition \ref{prop:trans-dilat-sing}, we can derive a useful control on the centering map.
	
	\begin{lemma}\label{lem:controlT}
		Given $\eta_1>0$, there exists $\eps_2>0$, such that when $\|u\|_{C^{2,\alpha}(M_0)}<\eps_2$, $\|\cC_u\|<\eta_1$.
	\end{lemma}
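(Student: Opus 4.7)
The plan is to argue by contradiction, with Proposition \ref{prop:trans-dilat-sing} supplying the key geometric input. Suppose the lemma fails. Then there exist $\eta_1>0$ and a sequence $u_i\in C^{2,\alpha}(M_0)$ with $\|u_i\|_{C^{2,\alpha}(M_0)}\to 0$ but $\|\cC_{u_i}\|\geq \eta_1$ for every $i$.

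For $i$ sufficiently large, I would first appeal to Proposition \ref{prop:smooth-dependence-on-initial-data-1} to show that the perturbed RMCF starting from $\{x+u_i(x)\bn(x)\ |\ x\in M_0\}$ remains $C^{2,\alpha}$-close to the original RMCF on an arbitrarily long time interval $[0,T_0]$. Rescaling back, the perturbed MCF is a small normal graph over $\bM_\tau$ on the time interval $[-1,-e^{-T_0}]$; choosing $T_0$ large so that $\bM_{-e^{-T_0}}$ is already strictly convex and close to a round sphere, the perturbed hypersurface at the corresponding time is also convex, and Huisken's theorem produces a unique spherical singularity for the perturbed MCF at some spacetime point $X_i=(x_i,\tau_i)$. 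The same graphical closeness, together with the fact that past the singular time both flows are empty, yields that the perturbed spacetime tracks $\cM_i$ converge to $\cM$ in the sense of Radon measures on $\R^{n+1}\times\R$.

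Proposition \ref{prop:trans-dilat-sing} therefore forces $X_i\to(0,0)$. Now $\cC_{u_i}$ is, by definition, the unique composition of a translation and dilation of $\R^{n+1}$ whose application to the perturbed initial data moves the resulting singularity to $(0,0)$. Using the parabolic scaling $\bM_\tau\mapsto \la(\bM_{\la^{-2}\tau}-a)$, one writes $\cC_{u_i}$ in closed form in terms of $(x_i,\tau_i)$ and the fixed initial time of the MCF; this formula is continuous in $(x_i,\tau_i)$ and reduces to the identity when $(x_i,\tau_i)=(0,0)$. Hence $\|\cC_{u_i}\|\to 0$, contradicting $\|\cC_{u_i}\|\geq \eta_1>0$.

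The step I expect to be the main obstacle is the measure-convergence statement $\cM_i\to\cM$, because Proposition \ref{prop:smooth-dependence-on-initial-data-1} only provides graphical closeness up to a pre-singular time. One must additionally control the perturbed flow on the short remaining time interval before the singularity forms. This I would do by combining an avoidance-principle/diameter bound (the perturbed flow is trapped between two small spherical barriers that shrink to $(0,0)$) with Huisken's monotonicity formula to confine both the perturbed singular time $\tau_i$ and the perturbed singular location $x_i$ to arbitrarily small spacetime neighborhoods of $(0,0)$. Once this compactness is in place, the remainder of the argument is a direct application of the cited results.
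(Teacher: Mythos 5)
Your proposal matches the paper's intended argument: the paper gives no details beyond ``a contradiction argument along with Proposition~\ref{prop:trans-dilat-sing},'' and your contradiction argument (via Proposition~\ref{prop:smooth-dependence-on-initial-data-1} for closeness on a long RMCF time interval, measure convergence of the spacetime tracks, Proposition~\ref{prop:trans-dilat-sing} to force $X_i\to(0,0)$, and the explicit continuity of the parabolic-rescaling parameters $(\al_i,U_i)=((\tau_i+1)^{-1/2},-(\tau_i+1)^{-1/2}x_i)$ in $X_i$) is a correct filling-in of exactly that argument. You also correctly identify the one genuine gap --- controlling the flow on the short interval after the graphical-closeness window --- and the avoidance-plus-diameter fix you sketch is sound; indeed, that fix already confines $X_i$ near $(0,0)$ directly, so the measure-convergence step via Proposition~\ref{prop:trans-dilat-sing} becomes somewhat redundant once it is in place.
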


	With this Lemma, we can incorporate the centering map to Proposition \ref{prop:smooth-dependence-on-initial-data-1}. Now we prove Theorem \ref{thm:stability}.
	
	\begin{proof}[Proof of Theorem \ref{thm:stability}]
		Suppose $M_t$ is a graph of function $u(t)$ over $\bS^n$ when $t$ is sufficiently large. Because $M_t$ has a slow singularity, for any $\kappa>0$, $u(t)\in\cK_\kappa$ when $t$ is sufficiently large. Let us fix $T>0$ such that $u(T)\in\cK_\kappa$. Suppose $\|u(T)\|=\eta$.
		Suppose $\eps_0$ is sufficiently small, to be determined later. By the well-posedness of RMCF, for $\|v_0\|_{C^{2,\alpha}}<\eps_0$, the perturbed RMCF $\widetilde M_t$ starting from $\cC_{u_0}(\{x+u_0(x)\bn(x)\ |\ x\in M_0\})$ can be written as a graph of function $v$ over $M_t$ for all $t\in[0,T]$, and $\|v(T)\|_{C^{2,\alpha}}\leq \eta'$, $\eta'$ to be determined.
		We choose $\eps_0$ so that $\eta'$ is sufficiently small, compared with $\eta$. As a consequence, we get $\|v(T)\|\leq \eta/C$, where $C$ is sufficiently large, such that if we write $\widetilde{M}_T$ as a graph of function $w$ over $\bS^n$, then $w(T)\in \cK_{\kappa/2}$ (see Theorem \ref{Thm:Appendix closeness of graphs-Q}).

		If we choose $\kappa$ sufficiently large at the beginning, Theorem \ref{thm:invariant cone} implies that $w(t)\in \cK_{\kappa/2}$ for all $t>T$. This implies that the dominant term of $w$ is the eigenfunctions in $\cX_2\oplus\cX_+$. Moreover, for all $\eps$ there exists $T$ such that for all $t>T$, we have $\|\pi_2w(t)\|\geq (1-\eps)\|w(t)\|$. Then Theorem \ref{thm:Thm2.1inSt1} implies that $w(t)$ characterizes a slow singularity. Indeed, we have proved that the projection of $w(t)$ to $\cX_2\oplus\cX_+$ dominates that of $\cX_-$. It is enough to show that $\pi_2 w(t)$ dominates also $\pi_+w(t)$. Suppose $w\in \cK_{\eps}^+$ for some small $\eps$, then Theorem \ref{thm:invariant cone} implies that $w(t)\in \cK_{\eps}^+$ for all future time and the mode $\pi_+w(t)$ grows exponentially like $e^{t/2}$ contradicting to the fact that the perturbed RMCF converges to $\bS^n$ after applying the centering map. This completes the proof.
		
	\end{proof}

	\section{Slow singularities are generic}\label{S:Slow singularities are generic}
	
	In this section, we demonstrate that slow singularities are generic. In the previous section, we established that the set of initial data that leads to a slow singularity is open. Now, we aim to show that this set is also dense, thereby establishing its genericity.
	
	\begin{theorem}[The denseness theorem]\label{thm:denseness}
		Suppose $\{\bM_\tau\}_{\tau\in[0,T)}$ is an MCF with a unique spherical singularity at the spacetime point $(0,0)$. Then for any $\eps_0>0$ there exists $u\in C^{2,\alpha}(\bM_0)$ with $\|u\|_{C^{2,\alpha}}<\eps_0$, such that the MCF starting from $\{x+u(x)\bn(x)\ |\ x\in \bM_0\}$ also has a unique spherical singularity that is slow.
	\end{theorem}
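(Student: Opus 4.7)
The plan is to combine the Herrero-Vel\'azquez density theorem for the linearized variational flow with the centering construction, and then to invoke the invariant cone machinery (Theorem \ref{thm:invariant cone}) together with Strehlke's asymptotic classification (Theorem \ref{thm:Thm2.1inSt1}). If $\{\bM_\tau\}$ already produces a slow singularity, there is nothing to prove; so assume the associated RMCF $\{M_t\}$ has graph function $u(\cdot,t)$ over $\bS^n$ which either vanishes identically or asymptotes to $e^{\lambda_k t}P_k|_{\bS^n}$ for some $k\geq 3$.

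The first step is to translate a $C^{2,\alpha}$ initial perturbation $u_0$ on $\bM_0$, of size at most $\eps_0$, into a controlled perturbation on the limiting sphere at some large time $T$ to be chosen. By Proposition \ref{prop:SX1prop3.2}, the perturbed RMCF stays graphical over $M_t$ for $t\in[0,T]$ with graph function $v(\cdot,t)=v^*(\cdot,t)+O(\|u_0\|^{1+\sigma_0})$ in $C^{2,\alpha}$, where $v^*$ solves the linearized equation $\partial_t v^*=L_{M_t}v^*$ with initial data $u_0$. Transplanting the resulting graph onto $\bS^n$ via Theorem \ref{Thm:Appendix closeness of graphs}, we obtain a graph function $\widehat v(\cdot,T)$ on $\bS^n$. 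The second step invokes Herrero-Vel\'azquez: the image of the time-$T$ linearized evolution operator $u_0\mapsto v^*(\cdot,T)$ is dense in a suitable function space on $\bS^n$. Thus, modulo arbitrarily small error, we may prescribe $\pi_2\widehat v(\cdot,T)$ to be any nonzero target element of $\cX_2$, at the cost of also generating generically comparable-size $\pi_+$ and $\pi_-$ components.

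The third step eliminates the unstable $\pi_+$ modes via the centering map. Apply $\cC_{u_0}$ to the perturbed MCF so that its spherical singularity is reattached to $(0,0)$; by Lemma \ref{lem:controlT}, $\|\cC_{u_0}\|$ is small when $\|u_0\|_{C^{2,\alpha}}$ is small. The key observation is that $\cX_+$ is spanned precisely by the infinitesimal generators of ambient dilations and translations, so after centering, the graph function $w(\cdot,T)$ over $\bS^n$ has $\pi_+ w(\cdot,T)$ equal to zero to leading order. Combined with the second step, we can arrange $\|\pi_2 w(\cdot,T)\|\geq \kappa\bigl(\|\pi_-w(\cdot,T)\|+\|\pi_+w(\cdot,T)\|\bigr)$ for $\kappa$ as large as desired, by choosing $T$ large and the $\cX_2$-target of sufficient norm relative to the background orbit; in particular $w(\cdot,T)\in\cK_\kappa$.

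Finally, Theorem \ref{thm:invariant cone}(2) propagates the cone condition $w(\cdot,t)\in\cK_\kappa$ for all $t\geq T$, while convergence of the centered flow to $\bS^n$ (ensured by Proposition \ref{prop:LS} once closeness at time $T$ is verified) forces $\pi_+w(\cdot,t)$ to remain small, as any nontrivial $\pi_+$ component would grow like $e^{t/2}$ and violate convergence. Hence $\pi_2w(\cdot,t)$ asymptotically dominates, and Theorem \ref{thm:Thm2.1inSt1} yields the slowest rate $\lambda_2=-1/n$, i.e., the perturbed singularity is slow. The main obstacle is the third step: one must rigorously justify that the centering map kills the leading-order $\pi_+$ component without polluting or swamping the $\pi_2$ component. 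This requires a careful linearization of the translation/dilation action on perturbations of $\bS^n$, matched to the linearized evolution of $v^*$, and is the technical content that motivates the appendices on translations/dilations and on the transplantation of graphs.
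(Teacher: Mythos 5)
Your high-level architecture matches the paper's: Herrero--Vel\'azquez density to seed a perturbation whose linearized evolution is $\cX_2$-dominant at some large time $T$, the centering map to pin the singularity back to $(0,0)$, the invariant cone theorem to propagate dominance forward, and Strehlke's classification to conclude the singularity is slow. However, two intermediate claims mischaracterize how the pieces fit together, and one of them, read literally, leaves a hole. You describe the density lemma as prescribing $\pi_2\widehat v(\cdot,T)$ while ``generating generically comparable-size $\pi_+$ and $\pi_-$ components.'' That is not what Lemma \ref{lem:dense} gives, and if $\pi_-$ really were comparable to $\pi_2$ the argument would break: the centering map is an $(n+2)$-parameter family whose infinitesimal action lies in $\cX_+$, so it does nothing to suppress $\pi_-$, and you could never reach $w(T)-f(T)\in\cK_\kappa$ for large $\kappa$. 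What the lemma actually provides is approximation in the full $Q$-norm: choosing a target lying in (the transplanted) $\cX_2$, one makes both $\pi_+$ and $\pi_-$ of the linearized solution arbitrarily small relative to $\pi_2$, i.e.\ $\|\pi_2 v^*(T)\|>(1-\eps/4)\|v^*(T)\|$ directly. Proposition \ref{prop:SX1prop3.2} then controls the nonlinear error, and the transplantation theorem preserves this, all \emph{before} centering enters. There is no step where one needs the $\cX_2$-target to be ``of sufficient norm relative to the background orbit'': only the relative error, not the absolute size, matters.

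Second, your ``key observation'' that after centering $\pi_+ w(\cdot,T)$ is ``zero to leading order'' inverts the logic of Lemma \ref{lem:consequence of dense}. The centering map is characterized by a global spacetime condition on the singularity, not by cancelling a projection at a fixed finite time. The paper's actual argument is: (i) the centering map is small (Lemma \ref{lem:controlT}) and by Lemma \ref{lem:A-Taylor} it perturbs the graph essentially by adding a small $\cX_+$ element; (ii) because the centered flow converges to $\bS^n$, the cone $\cK_\eps^+$ argument forces $\pi_+(w-f)$ to be small \emph{a posteriori}; (iii) together these show centering only slightly degrades the $\pi_2$-dominance. You do reach (ii) correctly in your closing paragraph, which is what saves the argument; the earlier framing that centering ``kills'' $\pi_+$ is the heuristic, not the mechanism, and as you note it is precisely the content of Appendix \ref{AS:Action of translations and dilations} to make this precise.
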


	If the MCF $\{\bM_\tau\}_{\tau\in[0,T)}$ already has a slow singularity, then we can simply choose $u=0$ in Theorem \ref{thm:stability} to ensure the singularity remains slow. Therefore, the more intriguing case is when $\{\bM_\tau\}_{\tau\in[0,T)}$ is a mean curvature flow with a singularity that is not slow.
	
	By applying Theorem \ref{thm:Thm2.1inSt1}, we can assume that for sufficiently large $t$, the RMCF $M_t$ can be represented as a graph of a function $u(\cdot,t)$ over $\mathbb{S}^n$, and there exist $k\geq 3$ and a degree $k$ homogeneous harmonic polynomial $P_k$, such that as $t\to\infty$,
	\[u(y,t)=e^{\lambda_k t}P_k(y)|_{\bS^n}+O(e^{(\lambda_k-\sigma)t}).\]
	
	First, we aim to demonstrate that we can select initial data in such a way that the solution to the variational equation can approximate the slow direction. This connection is crucial for understanding the impact of initial perturbations on perturbations near the singularity. The lemma below, which was essentially established by Herrero-Vel\'azquez in \cite{HV}, plays a significant role in our analysis. It should be noted that the solution to the variational equation $\partial_t v=L_{M_t}v$ with initial data $v_0$ can be expressed as $v(t)=\mathcal{T}(t,0)v_0$.
	
	\begin{lemma}[Lemma 5.2 in \cite{HV}]\label{LmHV}
		For any $t>0$ the map $\mathcal T(t,0):L^2(M_0)\to L^2(M_t)$ has dense image.
	\end{lemma}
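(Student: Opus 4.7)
The plan is to prove density by the standard duality argument, reducing the problem to backward uniqueness for the adjoint parabolic evolution. Concretely, suppose $w\in L^2(M_t)$ is $L^2$-orthogonal to $\mathrm{Image}\,\cT(t,0)$; the goal is to show $w=0$. I would construct a solution $\phi$ of the adjoint equation backward in time with terminal data $w$, read off from the duality pairing that $\phi$ vanishes at time $0$, and then apply backward uniqueness to conclude that $w=0$ as well.

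To set up the adjoint, I would use integration by parts on $M_s$ together with the $s$-derivative of the Riemannian measure $d\cH^n$ on $M_s$ (which along RMCF is controlled by the rescaled mean curvature and the normal component of the position vector) to identify the formal adjoint of $\pr_s-L_{M_s}$ as an operator on the moving space $L^2(M_s)$. This yields a backward linear parabolic equation
\begin{equation*}
-\pr_s\phi \;=\; L^*_{M_s}\phi + a(\cdot,s)\,\phi,\qquad \phi(\cdot,t)=w,
\end{equation*}
where $a$ is a smooth, uniformly bounded zeroth-order term on the compact slab $s\in[0,t]$. After the reversal $s\mapsto t-s$ this becomes a standard forward parabolic equation with smooth coefficients on a smoothly evolving compact family of hypersurfaces, hence it admits a unique solution $\phi\in C([0,t];L^2)\cap C^\infty((0,t)\times M)$. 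By construction the pairing $s\mapsto\int_{M_s} v(s)\phi(s)\,d\cH^n$ is independent of $s$, so the orthogonality assumption forces $\phi(\cdot,0)\equiv 0$ in $L^2(M_0)$.

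The main obstacle is the final step: deducing from $\phi(\cdot,0)=0$ that $\phi(\cdot,t)=w=0$. This is a backward uniqueness assertion for the time-reversed parabolic equation, which I would establish via the Agmon-Nirenberg log-convexity method. Writing the generator as a self-adjoint principal part plus a bounded perturbation, one shows that $s\mapsto \log\|\phi(s)\|^2_{L^2(M_s)}$ is convex in $s$, which rules out a nontrivial solution vanishing at any single time in $[0,t]$. Verifying the Agmon-Nirenberg hypotheses requires control on the $L^2$-operator norms of the symmetric and anti-symmetric parts of $L^*_{M_s}+a$; since $\{M_s\}_{s\in[0,t]}$ is a smooth compact family with uniformly bounded second fundamental form (the singular time lies strictly beyond $t$), these bounds follow from standard RMCF regularity. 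Once backward uniqueness is in hand, $w=0$ and density of the image follows.
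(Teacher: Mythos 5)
Your proposal follows essentially the same reduction as the paper: a duality argument (orthogonality to the image of $\cT(t,0)$ passes to the adjoint propagator $\cT^*$), constancy of the pairing $\langle \cT(s,0)u,\phi(s)\rangle$ along the flow to deduce that the adjoint solution vanishes at one end of the time interval, and then a backward uniqueness theorem to conclude it vanishes everywhere, in particular at the other end where it equals $w$. The only substantive difference is the tool invoked for the backward uniqueness step: you propose the Agmon--Nirenberg logarithmic convexity method, while the paper appeals to the Lions--Malgrange theorem. Both are correct here. Lions--Malgrange is a clean abstract result phrased entirely in terms of a $C^1$ family of coercive symmetric bilinear forms on a fixed Hilbert pair $V\subset H$, so its hypotheses are essentially immediate once one writes $V=H^1$, $H=L^2$, $A(s)=L_{M_{t-s}}-\tilde H^2$; in particular one does not have to wrestle explicitly with the symmetric/anti-symmetric splitting of the generator on the moving domain. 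The Agmon--Nirenberg route is slightly more hands-on in that it requires pulling back to a fixed manifold and verifying operator-norm bounds for the symmetric and anti-symmetric parts, as you indicate, but on the compact slab $[0,t]$ with uniformly bounded geometry this is indeed routine. Either backward-uniqueness input closes the argument.

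One small caution: the conserved pairing should be taken with respect to the measure $d\mu_{M_s}$ that appears in the paper's Duhamel identity (the Gaussian-weighted measure natural to RMCF, whose time derivative produces the $\tilde H^2$ term in the adjoint), not the raw $d\cH^n$; this does not affect the structure of your argument, only the precise zeroth-order coefficient $a(\cdot,s)$ in the adjoint equation.
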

	
	We prove this Lemma in Appendix \ref{AS:Denseness of image of the fundamental solution} using the idea of Herrero-Vel\'azquez in \cite{HV}. The next lemma raises the $L^2$-denseness to $Q$-denseness. We use $Q(M_t)$ to denote  $H^1(M_t)$ equipped with $Q$-norm. We prove it in Appendix \ref{AS:Denseness of image of the fundamental solution}.
	
	\begin{lemma}\label{lem:dense}
		For any $t>0$, the map $\mathcal T(t,0):\ L^2(M_0)\to Q(M_t)$ has dense image.
	\end{lemma}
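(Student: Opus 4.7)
The plan is to upgrade the $L^2$-density in Lemma \ref{LmHV} to $Q$-density by a duality argument that combines parabolic smoothing with the classical backward uniqueness theorem for parabolic equations. The overall picture: the $Q$-density statement reduces, by Hahn-Banach/Riesz, to an injectivity claim for the dual propagator on $H^{-1}(M_t)$, which is then handled by splitting the interval $[0,t]$ at some $\tau\in(0,t)$ and using backward uniqueness on the second piece to turn Lemma \ref{LmHV} on the first piece into the desired conclusion.

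First I would identify $Q(M_t)^{*}$ with $H^{-1}(M_t)$ via the $L^2$-pairing, and note that parabolic smoothing makes $\cT(t,0)$ a bounded map $L^2(M_0)\to H^1(M_t)$. Integration by parts on the closed manifold $M_t$ gives
\[
\langle \cT(t,0)v_0,g\rangle_Q = \bigl\langle \cT(t,0)v_0,\ (-\Delta_{M_t}+\Lambda)g\bigr\rangle_{H^1\times H^{-1}}
\]
for every $v_0\in L^2(M_0)$ and $g\in H^1(M_t)$, so the $Q$-adjoint of $\cT(t,0)$ factors as $\cT(t,0)^{*}\circ(-\Delta+\Lambda)$, where $\cT(t,0)^{*}:H^{-1}(M_t)\to L^2(M_0)$ denotes the dual of $\cT(t,0):L^2(M_0)\to H^1(M_t)$. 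Because $-\Delta+\Lambda$ is an isomorphism $H^1\to H^{-1}$ for $\Lambda>0$, Hahn-Banach reduces the desired $Q$-density to injectivity of $\cT(t,0)^{*}$ on $H^{-1}(M_t)$.

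To prove this injectivity, I would pick any $\tau\in(0,t)$ and decompose $\cT(t,0)^{*} = \cT(\tau,0)^{*}\circ\cT(t,\tau)^{*}$. The factor $\cT(\tau,0)^{*}:L^2(M_\tau)\to L^2(M_0)$ is injective by the dual form of Lemma \ref{LmHV} on $[0,\tau]$. For the factor $\cT(t,\tau)^{*}: H^{-1}(M_t)\to L^2(M_\tau)$, given $h\in H^{-1}(M_t)$ I interpret its image as the value at $s=\tau$ of the solution $u$ of the backward adjoint equation $-\partial_s u = L_{M_s}^{*} u$ on $[\tau,t]$ with $u(t)=h$. Viewed in reversed time $\sigma=t-s$ this is a forward parabolic equation with smooth bounded coefficients (since $M_s$ converges smoothly to $\bS^n$), so parabolic smoothing immediately upgrades the $H^{-1}$ datum to $L^2$: $u(s)\in L^2(M_s)$ for every $s\in[\tau,t)$. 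Assuming $u(\tau)=0$, the classical backward uniqueness theorem for parabolic equations (Lions-Malgrange, or the Agmon-Nirenberg log-convexity estimate) applied to the $L^2$-solution $u$ on $[\tau,t-\eps]$ forces $u\equiv 0$ there for every $\eps>0$, and weak continuity of $u$ in $H^{-1}$ at $s=t$ then yields $h=u(t)=0$. Composing the two injective factors shows $\cT(t,0)^{*}$ is injective on $H^{-1}$, and the proof is complete.

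The principal obstacle is this last step: combining parabolic smoothing (which instantaneously turns $H^{-1}$ data into $L^2$) with backward uniqueness to promote the $L^2$-injectivity of the adjoint propagator to $H^{-1}$-injectivity. One must verify that the coefficients of $L_{M_s}$ (involving $|A_{M_s}|^2$ and the RMCF drift term) are regular enough on $[0,t]$ for the classical backward uniqueness theorem to apply; along an RMCF converging smoothly to $\bS^n$ this is automatic. Apart from this interplay, the argument is routine functional-analytic bookkeeping around Lemma \ref{LmHV}.
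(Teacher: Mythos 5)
Your proof is correct and follows essentially the same blueprint as the paper's: identify $Q$-denseness with triviality of the $Q$-annihilator of the image, convert a $Q$-orthogonal element $v\in H^1(M_t)$ to the $L^2$-functional $(\Lambda-\Delta)v\in H^{-1}(M_t)$, push it through the adjoint propagator $\cT^*$, and kill it by Lions--Malgrange backward uniqueness, recovering $v=0$ from $(\Lambda-\Delta)v=0$ via Lax--Milgram. The only structural difference is where the smoothing is placed. The paper applies Lions--Malgrange directly to $w(s)=\cT^*(t-s,t)\bigl((\Lambda-\Delta)v\bigr)$ on all of $[0,t]$, whose datum at $s=0$ lies only in $H^{-1}$; the cited theorem, however, requires $u\in L^2((0,T),H^1)$, which is \emph{not} automatic for an $H^{-1}$ datum (by the standard energy computation $\int_0^T\|e^{s\Delta}f\|_{H^1}^2\,ds\sim\|f\|_{L^2}^2$), so this hypothesis is left implicit there. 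Your splitting of $[0,t]$ at $\tau$ --- reusing Lemma~\ref{LmHV} verbatim on $[0,\tau]$ and treating the remaining factor $\cT(t,\tau)^*:H^{-1}(M_t)\to L^2(M_\tau)$ by instantaneous parabolic smoothing, backward uniqueness on $[\tau,t-\eps]$, and weak $H^{-1}$-continuity at $s=t$ --- makes that regularity step explicit and closes the gap cleanly. Apart from this bookkeeping, the two arguments coincide; both ultimately rest on the $Q$-to-$H^{-1}$ duality through $\Lambda-\Delta$ and on Lions--Malgrange.
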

	
	For sufficiently large $t$, we write $M_t$ as a graph of a function $u(t)$ over $\bS^n$, pull back functions on $M_t$ to functions on $\mathbb S^n$ and identify the function spaces $H^1(M_t)$ and $H^1(\mathbb S^n)$ (we refer to this process as ``transplantation'' as discussed in \cite{SX1}, see Appendix \ref{AS:Transplantation}). Consequently, at least on the linear level, we can select an initial condition in such a way that at time $T$, the solution is near the eigenfunction direction corresponding to eigenvalue $\lambda_2=-1/n$. To be more precise, for any $T>0$ and $\epsilon>0$, it is possible to choose an initial data $v_0$ such that the solution $v(\cdot,t)$ to $\partial_t v=L_{M_t}v$ with the initial data $v_0$ satisfies $\|\pi_2 v(T)\|>(1-\epsilon/4)\|v(T)\|$.
	
	Next, we consider a perturbed RMCF $\widetilde{M}_t$ with an initial condition $\{x+\bar{\epsilon} v_0(x)\mathbf{n}(x)\ |\ x\in M_0\}$, where $\bar{\epsilon}$ is taken to be sufficiently small. According to Proposition \ref{prop:SX1prop3.2}, over the time interval $[0,T]$, the perturbed flow $\widetilde{M}_t$ can still be described as a graph of a function $u(t):\ M_t\to \R$, and we obtain $\|\pi_2 u(T)\|>(1-\epsilon/2)\|u(T)\|.$
	
	Now we can view the perturbation applied near the singularity. However, the perturbed MCF $\widetilde{\mathbf{M}}_\tau$ corresponding to $\widetilde{M}_t$ might not have the singularity at the spacetime point $(0,0)$. To address this, we apply the centering map to obtain a flow $\widehat{\mathbf{M}}_t: =\cC_{\bar{\epsilon} v_0}\widetilde{\mathbf{M}}_t$ that now has the singularity at the spacetime point $(0,0)$.
	
	The following lemma controls the size of the centering map.
	
	\begin{lemma}\label{lem:consequence of dense}
		Let $\eps>0$ be a small number, $T$ be a large time and $u:\ M_t\to \R$ be as above. Suppose $\|\pi_2 u(T)\|>(1-\eps)\|u(T)\|$. Then  after applying the centering map the flow $\widehat{\mathbf M}_t: =\cC_{\bar\eps v_0}\widetilde{\mathbf M}_t$ can be written as a graph of function $w(\cdot,t), t\in[0,\infty)$ over $M_t$, satisfying $\|\pi_2 w(T)\|>(1-2\eps)\|w(T)\|$.
	\end{lemma}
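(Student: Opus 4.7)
The plan is to show that the centering map $\cC_{\bar\eps v_0}$ affects the graph function at time $T$ primarily through a perturbation lying in $\cX_+$, so that the $\cX_2$-dominance of $u(T)$ is preserved, at the cost of only a mild loss in the dominance constant.

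First, I would make explicit the relation between $u(T)$ and $w(T)$. Since $\widehat{\bM}_\tau = \cC_{\bar\eps v_0}\widetilde{\bM}_\tau$ differs from $\widetilde{\bM}_\tau$ by a fixed translation-dilation of $\R^{n+1}$, in the rescaled RMCF picture at $(0,0)$ the surface $\widehat M_T$ is obtained from $\widetilde M_T$ by a small, explicit translation-dilation. Writing $\widehat M_T$ as a graph over $M_T$ with function $w(T)$ and $\widetilde M_T$ as a graph over $M_T$ with function $u(T)$, and combining the appendix on translations/dilations with the transplantation framework (Appendix \ref{AS:Transplantation}), I would obtain a decomposition $w(T) = u(T) + c + r$, where the leading piece $c$ lies in $\cX_+$ (because dilations add a constant, an eigenfunction of $L$ with eigenvalue $1$, and translations add $\langle a,\bn\rangle$, eigenfunctions with eigenvalue $1/2$) and $r$ is a quadratic remainder coming both from transplantation errors and from the nonlinear dependence of the action of small rigid motions on graphs over $\bS^n$.

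Second, I would control the sizes of $c$ and $r$ relative to $\|u(T)\|$. Lemma \ref{lem:controlT} gives $\|\cC_{\bar\eps v_0}\|\to 0$ as $\bar\eps\to 0$, and pushing this through the rescaling at time $T$ yields $\|c\|,\|r\|\to 0$. Since $v_0$ was constructed via Lemma \ref{lem:dense} so that the linear evolution $v^*(T)=\cT(T,0)v_0$ is $\eps$-dominated by $\cX_2$, the singularity drift induced by the perturbation, and hence $\|\cC_{\bar\eps v_0}\|$, is smaller than the generic $\bar\eps$-scale: heuristically it is of order $\eps\bar\eps$ from the linear $\pi_+$ content plus a nonlinear $O(\bar\eps^{1+\sigma_0})$ correction furnished by Proposition \ref{prop:SX1prop3.2}. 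Meanwhile $\|u(T)\|=\Theta(\bar\eps)$, so both $\|c\|/\|u(T)\|$ and $\|r\|/\|u(T)\|$ can be made arbitrarily small.

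Third, I would conclude. Since $c\in\cX_+$ we have $\pi_2 c=0$, so $\pi_2 w(T)=\pi_2 u(T)+\pi_2 r$. Thus $\|\pi_2 w(T)\|\ge\|\pi_2 u(T)\|-\|r\|$, while $\|w(T)\|\le\|u(T)\|+\|c\|+\|r\|$. Plugging $\|\pi_2 u(T)\|>(1-\eps)\|u(T)\|$ and the smallness of $\|c\|,\|r\|$ relative to $\|u(T)\|$ into the ratio $\|\pi_2 w(T)\|/\|w(T)\|$, a direct computation yields $\|\pi_2 w(T)\|>(1-2\eps)\|w(T)\|$ once $\bar\eps$ is taken small enough.

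The main obstacle is the second step: Lemma \ref{lem:controlT} only asserts qualitative smallness of $\|\cC_{\bar\eps v_0}\|$, so a priori its magnitude could match $\|u(T)\|$ and swamp the $\cX_2$-domination. Making the argument go through requires exploiting the specific construction of $v_0$, namely that its linear evolution is already nearly in $\cX_2$, so that the drift that the centering has to absorb is of lower order than the $\pi_2$-mass—this comparison is the analytical heart of the lemma.
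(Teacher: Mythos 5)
Your top-level plan (decompose $w(T)=u(T)+c+r$ with $c$ essentially in $\cX_+$, bound the extra pieces against $\|u(T)\|$, then run the triangle inequality) is the right skeleton and is essentially what the appendix does, using Lemma~\ref{lem:A-Taylor} to identify the leading $\cX_+$ contribution of a small translation–dilation. You also correctly put your finger on the crux: Lemma~\ref{lem:controlT} is only qualitative, so a priori the centering map could be of the same order as $\|u(T)\|$ and destroy the $\cX_2$ dominance.

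However, the way you propose to close that gap diverges from the paper and is not established. You argue that since $v^*(T)=\cT(T,0)v_0$ is $\eps$-dominated by $\cX_2$, the singularity drift (and hence $\|\cC_{\bar\eps v_0}\|$) is ``heuristically of order $\eps\bar\eps$ plus an $O(\bar\eps^{1+\sigma_0})$ nonlinear correction.'' There is no result in the paper that converts smallness of the $\pi_+$-content of the \emph{linearized} solution at a fixed time $T$ into a quantitative bound on the spacetime drift of the singularity of the \emph{nonlinear} flow: the $\cX_+$-modes grow exponentially for $t>T$ and the drift is a nonlinear functional of the entire trajectory, so this inference requires justification you do not supply. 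The paper instead never estimates the centering map from the construction of $v_0$; it bounds $\|\pi_+(w-f)\|$ directly by a dynamical contradiction. Since $w$ is the graph of the \emph{centered} flow, that flow has its singularity at $(0,0)$ and hence converges to $\bS^n$; if one had $\|\pi_+(w-f)\|>\eps\|w-f\|$, i.e.\ $w-f\in\cK^+_\eps$, then Theorem~\ref{thm:invariant cone} would force the $\cX_+$-component to grow exponentially, moving the singularity and contradicting the definition of the centering map. From $\|\pi_+(w-f)\|\le\eps\|w-f\|$, together with the Taylor expansion in Lemma~\ref{lem:A-Taylor} giving $\|\pi_+(u-w)\|\ge(|\alpha-1|+|\beta|)(1-2\eps_0)$ and $\|u-w\|\le(1+2\eps_0)(|\alpha-1|+|\beta|)$, one gets $\|u-w\|\le\frac{\eps}{2}\|w-f\|$, which is exactly the estimate your third step needs. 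In short: the bookkeeping is fine, but the analytical heart of the lemma is the cone/contradiction argument, not the heuristic $O(\eps\bar\eps)$ bound you propose; without the former, your second step is a genuine gap.
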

	
	We postpone the proof in Appendix \ref{AS:Action of translations and dilations}. Here we simply sketch the idea: because $\|\pi_2u(T)\|$ has a  large proportion in $\|u(T)\|$, the norm $\|\pi_+ u(T)\|$ has a small proportion. The centering map is dedicated to modulo the $\cX_+$ part, which is also small. Therefore, after applying the centering map, $\|\pi_2w(T)\|$ still dominates.
	
	\begin{proof}[Proof of Theorem \ref{thm:denseness}]
		Given $\eps_0$, let us choose $T_0$ sufficiently large, to be determined later.
		We  apply  Lemma \ref{lem:dense} to get  $v_0$ as an initial condition for the variational equation $\partial_t v^*=L_{M_t}v^*$ such that for $T$ sufficiently large, we have that $$\|\pi_2v^*(T)\|>(1-\eps/4) \|v^*(T)\|.$$
		Lemma \ref{lem:dense} gives only $v_0\in L^2$. Since $C^{r}$ is dense in $L^2$, we can assume $v_0$ is as smooth as we wish.
		
		Here $\eps$ is a small number to be determined. Now we choose $\overline{\eps}<\eps_0$ very small, such that $C\overline{\eps}^{\sigma_0}<\frac{1}{10}\eps\|v^*(T)\|$. Here $C$ and $\sigma_0$ are given in Proposition \ref{prop:SX1prop3.2}. Then Proposition \ref{prop:SX1prop3.2} implies that the perturbed RMCF $\widetilde{M}_t$ starting from $\overline{\eps}v_0$ can be written as a graph of function $v(t)$ over $M_t$ for $t\in[1,T]$, and $$\|\pi_2v(T)\|>(1-1/2\eps) \|v(T)\|.$$
		Suppose the perturbed RMCF $\widetilde{M_{T}}$ is a graph of function $w^*(T)$ over $\bS^n$. Then if at the beginning we choose $T$ sufficiently large, using Theorem \ref{Thm:Appendix closeness of graphs} we can show that  $$\|\pi_2 (w^*(T)-f(T))\|>(1-\eps) \|(w^*(T)-f(T))\|,$$
		where $f(t)$ is the graphical function of the unperturbed flow $M_t$ over $\bS^n$ for $t$ large.
		We next apply the centering map to translate the perturbed MCF $\widetilde{\mathbf M}_t$ corresponding to the RMCF $\widetilde{M}_{t}$.  Lemma \ref{lem:consequence of dense} implies that after the centering map, the RMCF $\widehat{M}_t$ corresponding to the centered MCF $\widehat{\mathbf M}_t: =\cC_{\bar\eps v_0}\widetilde{\mathbf M}_t$ can be written as a graph of function $w(t)$ over $\bS^n$ when $t$ is sufficiently large, and $$\|\pi_2(w(T)-f(T))\|\geq (1-2\eps)\|\pi_2(w(T)-f(T))\|.$$
		
		Then we have $w(T)-f(T)\in \cK_\kappa$ for some large $\kappa$. Theorem \ref{thm:invariant cone} together with Proposition \ref{prop:LS} imply that $w(t)-f(t)\in \cK_{\kappa}$ for all $t>T$. In particular, arguing similarly as the end of the proof of Theorem \ref{thm:stability}, this implies that $w$ has the dominant term in $\cX_2$, i.e. $w$ characterizes a slow singularity.  
	\end{proof}

	\appendix
	\section{Action of translations and dilations}\label{AS:Action of translations and dilations}
	
	In Euclidean space $\mathbb{R}^{n+1}$, the translations form a group denoted by $\mathbb{R}^{n+1}$, and the dilations form a semigroup denoted by $\mathbb{R}_+$. Each of these spaces carries a natural (Euclidean) norm. Let $(U,\alpha)$ be an element in $\mathbb{R}^{n+1}\times\mathbb{R}_+$. For any point or hypersurface, $(U,\alpha)$ acts by first dilating the point/hypersurface by $\alpha$ and then translating it by the vector $U$.
	Suppose $\Sigma$ is a hypersurface in $\mathbb{R}^{n+1}$ and $\Gamma$ is a graph of the function $u$ over $\Sigma$, which means $\Gamma = {x+u(x)\mathbf{n}(x) : x \in \Sigma}$, where $\mathbf{n}$ is the unit normal vector field on $\Sigma$. Let $|U|=1$. We want to understand how the action of $(\beta U, \alpha)$ on $\Gamma$ would change this graph. Suppose $(\beta U, \alpha)(\Gamma) = \widetilde{\Gamma}$. We assume that $|(\alpha U, \alpha)|$ is sufficiently small such that $\widetilde{\Gamma}$ is also the graph of a function $w$ over $\Sigma$. In other words, we have
	$$\Gamma=\{x+u(x)\bn(x):x\in\Sigma\}, \mathrm{\ and\ } \widetilde{\Gamma}=\{x+w(x)\bn(x):x\in\Sigma\}.$$ The action of $(U,\alpha)$ indicates that
	\[
	\{\alpha(x+u(x)\bn(x))+U:x\in\Sigma\}=\{x+w(x)\bn(x):x\in\Sigma\}.
	\]
	
	We will be interested in the case that $\Sigma$ is a sphere. In this case, we can assume $x=\bn(x)$, and the second fundamental form is the identity. Then $\alpha(r+u(x_0))\bn(x_0)+\beta U=x_{\alpha,\beta}+w_{\alpha,\beta}(x_{\alpha,\beta})\bn(x_{\alpha,\beta})$. 
	We have the following control on $Q$-norm.
	\begin{lemma}\label{lem:A-Taylor}
		There exists $\eps_0>0$ with the following significance. When $\|u\|_{C^1}\leq \eps_0$, $|\alpha-1|\leq\eps_0$ and $\beta<\eps_0$, we have
		\begin{equation}
			\left\|w_{\alpha,\beta}-u-\left[(\alpha-1)+
			\beta\langle U,x\rangle\right]\right\|_{Q}\leq C(|\alpha-1|^2+|\beta|^2+\|u\|_{C^1}(|\alpha-1|+|\beta|)),
		\end{equation}

		where $C$ is a uniform constant.
	\end{lemma}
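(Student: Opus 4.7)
The plan is to write the action of $(\beta U,\alpha)$ on the graph $\Gamma$ as a perturbation of the identity and then Taylor expand in the three small parameters $a:=\alpha-1$, $\beta$, and $u$. On the unit sphere $\Sigma$, we have $\bn(x)=x$ and so the graph point corresponding to $x\in\Sigma$ is $(1+u(x))x$. Its image under $(\beta U,\alpha)$ is
\[
P(x):=\alpha(1+u(x))x+\beta U,
\]
which must equal $(1+w_{\alpha,\beta}(y))y$ for the unique $y=P(x)/|P(x)|\in\Sigma$ with $w_{\alpha,\beta}(y)=|P(x)|-1$. The whole proof reduces to expanding $|P(x)|$ and the relation between $y$ and $x$ to the required order.

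First I would compute
\[
|P(x)|^2=\alpha^2(1+u)^2+2\alpha\beta(1+u)\langle x,U\rangle+\beta^2=1+2A(x)+R(x),
\]
where $A(x):=a+u(x)+\beta\langle x,U\rangle$ collects the first-order contributions and $R(x)$ is a sum of terms quadratic in $(a,\beta,u)$, specifically of size $O(a^2+\beta^2+au+u^2+(a+u)\beta)$. Taking square roots via $\sqrt{1+2A+R}=1+A+\tfrac12(R-A^2)+O(|A|^3)$ yields
\[
w_{\alpha,\beta}(y(x))=u(x)+a+\beta\langle x,U\rangle+E_1(x),
\]
with $|E_1(x)|\le C(a^2+\beta^2+|a|\|u\|_{C^0})$ pointwise. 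Next I would compare the values at $y$ and $x$: from $P=\alpha(1+u)x+\beta U$ and $|P|=1+O(a+\|u\|_{C^0}+\beta)$ one finds $y-x=\beta(U-\langle x,U\rangle x)+O(\text{quadratic})$, so $|y-x|=O(\beta)$. This gives $u(x)-u(y)=O(\beta\|u\|_{C^1})$ and $\beta\langle x-y,U\rangle=O(\beta^2)$, yielding the pointwise (hence $L^2$) estimate
\[
\bigl|w_{\alpha,\beta}(y)-u(y)-[a+\beta\langle U,y\rangle]\bigr|\le C(a^2+\beta^2+\|u\|_{C^1}(|a|+|\beta|)).
\]

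To upgrade to the $Q$-norm I must also control the gradient of the error on $\bS^n$. The key observation is that the map $x\mapsto y(x)=P(x)/|P(x)|$ is a smooth diffeomorphism of $\bS^n$ whose differential is a $(1+O(a+\beta+\|u\|_{C^1}))$-perturbation of the identity; this follows by differentiating $P$ and $1/|P|$ and using the bounds above. Applying the chain rule to $y\mapsto w_{\alpha,\beta}(y)=|P(x(y))|-1$ and using the expansion of $|P|$, the derivatives of the error $w_{\alpha,\beta}-u-[a+\beta\langle U,\cdot\rangle]$ involve either (i) derivatives of already-quadratic terms in the Taylor expansion of $|P|$, which remain of size $a^2+\beta^2+\|u\|_{C^1}(|a|+|\beta|)$, or (ii) cross-terms arising from $u(x)-u(y)$ whose tangential derivative is controlled by $\|u\|_{C^1}\cdot(|a|+|\beta|)$ together with an $O(|\beta|\|u\|_{C^2})$ contribution (but the factor $\|u\|_{C^2}$ only appears multiplied by $|\beta|^2$, which stays in the admissible remainder because $\|u\|_{C^1}$ already dominates). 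Integrating over $\bS^n$ and combining with the pointwise bound produces the desired $Q$-norm estimate since $Q$ is equivalent to $H^1$.

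The main technical obstacle is this gradient bookkeeping: one must verify that after differentiating, no new terms of purely linear order in $(a,\beta,u)$ appear, i.e.\ that all the first-order contributions really are captured by $u(y)+a+\beta\langle U,y\rangle$. This is ultimately because the linearization of translation by $\beta U$ on the space of normal graphs over $\bS^n$ is precisely $\beta\langle U,x\rangle$, and the linearization of dilation by $1+a$ is precisely the constant function $a$; verifying this at the level of derivatives, and uniformly in $x\in\bS^n$, is the one step that requires care, while all remaining estimates are routine applications of Taylor's theorem with remainder.
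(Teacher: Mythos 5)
Your proposal is essentially the same Taylor-expansion argument as the paper's; the paper differentiates the implicit graph relation $\alpha(r+u(x_0))\mathbf n(x_0)+\beta U=x_{\alpha,\beta}+w_{\alpha,\beta}(x_{\alpha,\beta})\mathbf n(x_{\alpha,\beta})$ in $(\alpha,\beta)$ at $(1,0)$ and records the first-order coefficients, while you expand the explicit quantity $|P(x)|=|\alpha(1+u)x+\beta U|$ and the point correspondence $y=P(x)/|P(x)|$ directly; the two are equivalent, and both end by squaring the pointwise expansions and integrating.

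The one place your argument is unconvincing is the gradient bookkeeping where $u(x)-u(y(x))$ appears. Differentiating in $y$ produces the term $\nabla u(x(y))-\nabla u(y)$, and since $|x-y|=O(|\beta|)$ this term is genuinely $O(\|u\|_{C^2}|\beta|)$, not ``$\|u\|_{C^2}$ multiplied by $|\beta|^2$'' as you assert; the parenthetical justification (``$\|u\|_{C^1}$ already dominates'') does not hold because $\|u\|_{C^2}$ is not controlled by $\|u\|_{C^1}$. This is a real issue with getting the stated remainder $\|u\|_{C^1}(|\alpha-1|+|\beta|)$ for the gradient part of the $Q$-norm, but it is one the paper's own proof also slides past: the paper's intermediate formula $\nabla w=(Dx_{\alpha,\beta}/Dx)^{-1}\alpha\nabla u$ has $\nabla u$ and $\nabla w$ evaluated at different base points without accounting for it, and its claimed (A.5) follows only up to a $\|u\|_{C^2}|\beta|$ error. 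In effect the lemma is stated slightly more strongly than either proof supports; the honest remainder involves $\|u\|_{C^2}$ rather than $\|u\|_{C^1}$. This is harmless for the applications in the paper (where $u$ is a graph of a smooth RMCF converging to the sphere, so $\|u\|_{C^2}$ is as small as needed), but you should not claim the term is $O(\|u\|_{C^2}|\beta|^2)$; instead either state the bound with $\|u\|_{C^2}$, or restrict the lemma's hypothesis to $\|u\|_{C^2}\le\eps_0$.

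Otherwise your computation is sound: the expansion $|P|^2=1+2A+R$ with $A=a+u+\beta\langle x,U\rangle$, the square-root expansion, the identification $|y-x|=O(|\beta|)$ plus quadratic terms, and the observation that $D_x y=\mathrm{Id}+O(|\beta|+\|u\|_{C^1})$ are all correct, and the closing remark identifying the linearizations of dilation and translation as $a$ and $\beta\langle U,x\rangle$ is exactly the right sanity check.
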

	
	\begin{proof}

		It is clear that $x_{1,0}=x$ and $w_{1,0}(x)=u(x_0)$. Taking derivative of $\alpha$ and $U$ we obtain the following: at $\alpha=1,\beta=0$,
		\begin{equation}
			\pr_{\alpha}x_{\alpha,\beta}=0, \
			\pr_\beta x_{\alpha,\beta}=\frac{U^\top}{1+u}.
		\end{equation}
		and
		\begin{equation}
			\pr_{\alpha} w_{\alpha,\beta}
			=
			1+u,\
			\pr_{\beta} w_{\alpha,\beta} =\langle U,x_0\rangle- \frac{\langle \nabla u,U\rangle}{1+u}.
		\end{equation}
		
		Then we have
		\begin{equation}\label{eq:A-Taylor}
			w_{\alpha,\beta}=u+(1+u)(\alpha-1)+
			\left(\langle U,x_0\rangle- \frac{\langle \nabla u,U\rangle}{1+u}\right)\beta
			+O((\alpha-1)^2+\beta^2).
		\end{equation}
		
		Similarly, taking the gradient we get
		\[
		\nabla w(x)
		= \left(\frac{D x_{\alpha,\beta}}{Dx}\right)^{-1} \alpha\nabla u(x),
		\]
		where $\frac{D x_{\alpha,\beta}}{Dx}$ is the differential of $x_{\alpha,\beta}$ at $\alpha=1$, $\beta=0$. From the local expansion
		\[
		\frac{D x_{\alpha,\beta}}{Dx}=\mathrm{Id}
		+\beta \frac{\nabla U^\top}{1+u}
		-\beta \frac{U\otimes\nabla u}{(1+u)^2} +O((\alpha-1)^2+\beta^2),
		\]
		we can see that
		\begin{equation}\label{eq:A-Taylor-2}
			\begin{split}
				\nabla w(x)
				=&
				\nabla u(x) +(\alpha-1)\nabla u(x)+
				\beta\nabla\langle U,x\rangle +
				\\
				&+ O(|\alpha-1|^2+|\beta|^2+\|u\|_{C^1}(|\alpha-1|+|\beta|)).
			\end{split}
		\end{equation}
		
		Pointwise we have the Taylor expansion \eqref{eq:A-Taylor} and \eqref{eq:A-Taylor-2}. Then we integrate the square and use Cauchy-Schwarz inequality to get the desired inequality.
	\end{proof}
	
	Next, we prove Lemma \ref{lem:consequence of dense}. The main idea is to show that if the $\cX_2$ part of a function $u$ has a large proportion in $\|u\|$, then the $\cX_+$ part has a small proportion, hence the centering map is also small. Otherwise, the centering map will enhance the $\cX_+$ part a lot, then the invariant cone argument (Theorem \ref{thm:invariant cone}) shows that the graph RMCF grows exponentially, which is a contradiction to the definition of a centering map.
	
	Let us recall the setting. Suppose $M_t$ is a graph of function $f$ over $\bS^n$, and suppose $\widetilde{M_t}$ is a graph of function $u$ over $\bS^n$. Suppose both $\|f\|_{C^2}$ and $\|u\|_{C^2}$ are sufficiently small. Throughout the rest of this section, $\|\cdot\|$ is the $Q$-norm. Suppose $\|\pi_2(u-f)\|\geq (1-\eps)\|u-f\|$ for some sufficiently small $\eps$. Now we apply the centering map to $\widetilde{M_t}$, and suppose after the centering map $\cC(\widetilde{M_t})$ is a graph of a function $w(t)$ over $\bS^n$.
	
	\begin{lemma}[Lemma \ref{lem:consequence of dense}]
		In the above setting, we have $$\|\pi_2(w-f)\|\geq (1-2\eps)\|w-f\|.$$
	\end{lemma}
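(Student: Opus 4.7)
The plan is to express the centered graph function $w$ in terms of the uncentered graph $u$ via Lemma \ref{lem:A-Taylor}. Write $g := u-f$ and decompose $w - f = g + h$, where $h = w - u$ records the effect of the centering transformation $(\beta U,\alpha)$. Lemma \ref{lem:A-Taylor} yields
\[
h = h_{\mathrm{lin}} + h_{\mathrm{err}}, \qquad h_{\mathrm{lin}} := (\alpha-1) + \beta\langle U, x\rangle \in \cX_+,
\]
with $\|h_{\mathrm{err}}\| \leq C\bigl(|\alpha-1|^2 + \beta^2 + \|u\|_{C^1}(|\alpha-1| + |\beta|)\bigr)$. Since $h_{\mathrm{lin}}$ lives purely in $\cX_+$, the centering alters only the $\pi_+$-projection, up to the quadratic error $h_{\mathrm{err}}$. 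The hypothesis $\|\pi_2 g\|\geq (1-\eps)\|g\|$ already gives $\|\pi_+ g\|^2 + \|\pi_- g\|^2 \leq (2\eps-\eps^2)\|g\|^2$, so $\|\pi_+ g\| = O(\sqrt{\eps}\,\|g\|)$.

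The crucial step is to prove $|\alpha-1| + |\beta| \leq C\|\pi_+ g\|$, up to a subleading correction. I would argue this from the defining property of the centering map: the centered flow $\widehat{M}_t$ converges to $\bS^n$, and since the unperturbed $M_t$ does as well, $w(t) - f(t)\to 0$. If the $\pi_+$-component of $w(T) - f(T)$ were not negligible compared to $\pi_2(w(T)-f(T))$, then Theorem \ref{thm:invariant cone}(1) would trap the difference $w-f$ inside the cone $\cK^+_\kappa$ for some large $\kappa$, forcing an $e^{t/2}$ growth that contradicts the decay to zero. Hence the centering must essentially kill $\pi_+ g$: $h_{\mathrm{lin}} = -\pi_+ g + O(\|h_{\mathrm{err}}\|)$, which pins down $|\alpha-1|+|\beta|$ in terms of $\|\pi_+ g\|$. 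Combined with smallness of $\|u\|_{C^1}$ (at large $T$) and the quadratic nature of the error, this forces $\|h_{\mathrm{err}}\| \ll \|\pi_+ g\|$, so the correction is of strictly lower order.

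With these estimates, $\pi_+(w-f) = O(\|h_{\mathrm{err}}\|)$ is negligible, $\pi_2(w-f) = \pi_2 g + \pi_2 h_{\mathrm{err}}$, and $\pi_-(w-f) = \pi_- g + \pi_- h_{\mathrm{err}}$. Up to lower-order terms,
\[
\|w-f\|^2 \approx \|\pi_2 g\|^2 + \|\pi_- g\|^2 = \|g\|^2 - \|\pi_+ g\|^2 \leq \|g\|^2, \qquad \|\pi_2(w-f)\|^2 \approx \|\pi_2 g\|^2 \geq (1-\eps)^2\|g\|^2.
\]
Dividing, $\|\pi_2(w-f)\|^2/\|w-f\|^2 \geq (1-\eps)^2 \geq 1-2\eps$, hence $\|\pi_2(w-f)\|\geq \sqrt{1-2\eps}\,\|w-f\|\geq (1-2\eps)\,\|w-f\|$ for $\eps \in (0,1/2)$. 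The gap between $\sqrt{1-2\eps}$ and $1-2\eps$ comfortably absorbs the $O(\|h_{\mathrm{err}}\|)$ contributions left over from the approximations above.

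The main obstacle is the quantitative bound on the centering parameters in the second step. The heuristic ``centering kills the $\cX_+$-mode'' is transparent at the linearized level, but rigorously one must apply the invariant cone dichotomy of Theorem \ref{thm:invariant cone} to the nonlinear difference $w - f$ of two RMCFs converging to the same sphere and simultaneously control the nonlinear correction $h_{\mathrm{err}}$. Once this bound on $|\alpha-1|+|\beta|$ is in hand, the remaining arithmetic, as outlined above, is routine.
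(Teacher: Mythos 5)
Your proposal takes essentially the same route as the paper's proof: both reduce the lemma to controlling the size of the centering parameters $|\alpha-1|+|\beta|$ via the Taylor expansion of Lemma \ref{lem:A-Taylor}, and both invoke the invariant cone dichotomy of Theorem \ref{thm:invariant cone} together with the fact that the centered flow remains bounded (it converges to $\bS^n$) to conclude that $\|\pi_+(w-f)\|$ must be a small fraction of $\|w-f\|$, which in turn pins down $|\alpha-1|+|\beta|$. The only differences are organizational — the paper chains triangle inequalities to reduce to $\|u-w\|\leq\tfrac{\eps}{2}\|w-f\|$, while you decompose $w-f = g + h_{\mathrm{lin}} + h_{\mathrm{err}}$ and compare norms directly — and your $\|\pi_+ g\| = O(\sqrt{\eps}\,\|g\|)$ from $Q$-orthogonality is the sharp consequence of the hypothesis.
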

	
	\begin{proof}
		By triangle inequality, $\|\pi_2(w-f)\|\geq \|\pi_2(u-f)\|-\|\pi_2(u-w)\|_{L^2}$, $\|w-f\|\leq \|(u-f)\|+\|u-w\|$. Thus
		\[
		\begin{split}
			\|\pi_2(w-f)\|
			\geq&
			(1-\eps)(\|w-f\|-\|u-w\|)-\|\pi_2(u-w)\|
			\\
			\geq&
			(1-\eps)\|w-f\|-2\|u-w\|.
		\end{split}
		\]
		Then we only need to show $\|u-w\|\leq \frac{\eps}{2}\|w-f\|$.
		Suppose the centering map is given by $(\alpha,\beta U)$. Lemma \ref{lem:A-Taylor} implies that when $|\alpha-1|$ and $|\beta|$ and $\|u\|_{C^1}$ are sufficiently small,
		\[
		\|\pi_+(u-w)\|\geq (|\alpha-1|+|\beta|)(1-2\eps_0).
		\]
		Then we have
		\[
		\|\pi_+(w-f)\|\geq (|\alpha-1|+|\beta|)(1-2\eps_0)
		-
		\|\pi_+ (u-f)\|.
		\]
		Because $w$ is the graph of a hypersurface after the centering map, $\|\pi_+(w-f)\|\leq \eps\|(w-f)\|$, since otherwise, Theorem \ref{thm:invariant cone} would imply that the cone $\cK^+_\eps$ is preserved under the dynamics and $\pi_+(w-f)$ grows exponentially hence changes the spacetime location of the singularity, which is a contradiction to the definition of the centering map. So we get the estimate
		\[|\alpha-1|+|\beta|\leq \frac{1}{1-2\eps_0}
		(\eps\|(w-f)\|+\|\pi_+(u-f)\|).
		\]
		Lemma \ref{lem:A-Taylor} also implies that
		\[
		\|(u-w)\|\leq(1+2\eps_0) (|\alpha-1|+|\beta|).
		\]
		From $\|\pi_2(u-f)\|\geq (1-\eps)\|u-f\|$, we also know that $\|\pi_+(u-f)\|\leq \frac{\eps}{1-\eps}\|u-f\|$.
		Combining all the ingredients above shows that $\|\pi_+(w-f)\|\leq \eps\|(w-f)\|$.
	\end{proof}
	\section{Denseness of image of the fundamental solution}\label{AS:Denseness of image of the fundamental solution}
	
	Let $M_t, t\in[0,T]$ be an RMCF. Let $\mathcal T(t,0):\ L^2(M_0)\to L^2(M_t)$ be the fundamental solution to the variational equation $\partial_t u=L_{M_t}u$ and $\cT^*(0,t):\  L^2(M_t)\to L^2(M_0)$ its adjoint solving the conjugate equation $\partial_t v=-L_{M_t}v+\tilde H^2 v,$ where $\tilde H=H-\langle x,\mathbf n(x)\rangle/2$ is the rescaled mean curvature. The extra factor takes into account of the fact $\frac{d}{dt} d\mu_{M_t}=-\tilde H^2d\mu_{M_t}$. Denoting $\square_t=\partial_t -L_{M_t}$ and $\square^*_{t}=-\partial_t -L_{M_t}+\tilde H^2, $ we get the Duhamel principle (c.f. Lemma 26.1 of \cite{CC})
	$$\int_0^T dt \int_{M_t}  (\square_t A(x,t) B(x,t)-A(x,t) \square^*_t B(x,t))d\mu_{M_t}=\int_{M_t}A(x,t)B(x,t)d\mu_{M_t} \Big|_{t=0}^{t=T}$$
	for all $A,B:\ \cup_{t\in [0,T]} M_t\to \R$ that are $C^2$ in $x$ and $C^1$ in $t$. 
	
	This implies that in particular $\langle v, \mathcal T(t,0) u\rangle_{L^2(M_t)}=\langle \mathcal T^*(0,t)v,  u\rangle_{L^2(M_0)} $, where $u\in L^2(M_0)$ and $v\in L^2(M_t)$.  We refer readers to \cite[Chapter 26]{CC} for heat kernel on evolving manifolds.
	
	We first give the proof of Lemma \ref{LmHV} following Lemma 5.2 of \cite{HV}.
	\begin{proof}[Proof of Lemma \ref{LmHV}]
		Suppose $v\in L^2(M_t)$ satisfies $\langle v,\cT(t,0)u\rangle=0$ for all $u\in L^2(M_0)$, where $\langle\cdot,\cdot\rangle$ is the $L^2(M_t)$-inner product. Then we get $\langle \cT^*(0,t)v,u\rangle=0$ for all $u\in L^2(M_0)$, which implies $\cT^*(0,t)v=0$. We next denote $w(s)=\cT^*(t-s,t)v$ treating $t$ as fixed. Thus we get $w(t)=0$ and $w(0)=v$ and $w$ solves the equation $\partial_s w=L_{M_{t-s}} w-\tilde H^2_{M_{t-s}} w. $  Applying the following backward uniqueness theorem of Lions-Malgrange, we get $w(0)=v=0$. The proof is then complete.
	\end{proof}
	\begin{theorem}[\cite{LM}]
		\begin{enumerate}
			\item Let $V\subset H$ be two Hilbert spaces such that the injection $V\subset H$ is continuous and $V$ is dense in $H$. 
			\item Let $A(t)$ be a self-adjoint operator such that the function $a(t,u,v):=\langle A(t)u,v\rangle_H$ is a semilinear form continuous on $V\times V$,  is $C^1$ in $t\in [0,T]$, and there exists $C,\lambda, \al>0$ such that $\al \|v\|_V^2\leq a(t,u,u)+\lambda\|u\|_H^2$.
			\item Let $u$ satisfy $u\in L^2((0,T), V)$,$u'\in L^2((0,T),H)$,  $u\in \mathrm{Dom}(A(t))$ for each $t\in (0,T)$, and $\partial_tu+A(t)u=0$, $u(T)=0$.  
		\end{enumerate}
		Then $u\equiv0$ on $[0,T]$.
	\end{theorem}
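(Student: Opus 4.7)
The plan is to prove this backward uniqueness theorem by the classical logarithmic convexity method. Set $N(t) := \|u(t)\|_H^2$ and argue by contradiction: assume $u \not\equiv 0$ on $[0,T]$. Continuity of $u$ into $H$ (a standard consequence of $u \in L^2((0,T),V)$ together with $u' \in L^2((0,T),H)$), combined with the forward parabolic energy estimate that follows from the coercivity hypothesis, shows that there is some $t_0 < T$ with $N(t) > 0$ on $[t_0, T)$, so $\log N$ is finite there.

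The heart of the argument is to differentiate $N$ twice. Using $u'(t) = -A(t)u(t)$ and self-adjointness of $A(t)$, one formally obtains
\begin{equation*}
N'(t) = -2\,a(t,u,u), \qquad N''(t) = -2\,\dot a(t,u,u) + 4\,\|A(t)u\|_H^2,
\end{equation*}
where $\dot a$ denotes the $t$-derivative of the bilinear form $a$, well-defined since $a$ is $C^1$ in $t$. The Cauchy--Schwarz inequality applied to $a(t,u,u) = \langle A(t)u, u\rangle_H$ gives $a(t,u,u)^2 \le \|A(t)u\|_H^2\,N(t)$, so combining these yields
\begin{equation*}
N(t)\,N''(t) - (N'(t))^2 \ge -2\,\dot a(t,u,u)\,N(t).
\end{equation*}
Using $|\dot a(t,u,u)| \le C\|u\|_V^2$ (from the $C^1$-dependence of $a$ and its continuity on $V \times V$) together with the coercivity bound $\alpha\|u\|_V^2 \le a(t,u,u) + \lambda\,N(t) = -\tfrac12 N'(t) + \lambda N(t)$, we arrive at a differential inequality of the form $(\log N)''(t) \ge -C_1|(\log N)'(t)| - C_2$. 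Equivalently, the modified energy $\widetilde N(t) := e^{\mu t + \nu t^2} N(t)$ has convex logarithm on $(t_0,T)$ for suitable constants $\mu, \nu$.

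To conclude, $\widetilde N$ extends continuously to $T$ with $\widetilde N(T) = 0$, so $\log \widetilde N(T^-) = -\infty$. For any $t_0 < s < t < T$, convexity of $\log \widetilde N$ gives
\begin{equation*}
\log \widetilde N(t) \le \tfrac{T-t}{T-s}\log \widetilde N(s) + \tfrac{t-s}{T-s}\log \widetilde N(T^-) = -\infty,
\end{equation*}
forcing $\widetilde N(t) = 0$, which contradicts $N(t) > 0$ on $(t_0,T)$. Hence $u \equiv 0$ on $[0,T]$.

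The main obstacle will be justifying the computation of $N''$ under only the weak regularity $u \in L^2((0,T),V)$, $u' \in L^2((0,T),H)$, and $u(t) \in \mathrm{Dom}(A(t))$ for a.e.\ $t$: a priori $u''$ need not exist in $H$, so the identity for $N''$ must be interpreted distributionally or recovered through a Galerkin approximation (or mollification of $u$ in $t$), passing to the limit via the uniform $L^2$ bounds. Symmetry of $a$, self-adjointness of $A(t)$, the $C^1$-dependence of $a$ in $t$, and the coercivity up to the shift $\lambda$ are all used essentially in this justification; once the key differential inequality is established, the convexity argument and the contradiction from $N(T) = 0$ are elementary.
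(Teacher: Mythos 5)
The paper cites this theorem from Lions--Malgrange \cite{LM} and does not reprove it, so there is no ``paper's own proof'' to compare against; what follows is a review of your argument on its own terms.

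Your overall strategy --- logarithmic convexity of $N(t)=\|u(t)\|_H^2$ --- is the classical route to backward uniqueness (Agmon, Agmon--Nirenberg) and it does work here, and the formal identities $N'=-2a(t,u,u)$, $N''=-2\dot a(t,u,u)+4\|A(t)u\|_H^2$, and the Cauchy--Schwarz step $NN''-(N')^2\ge -2\dot a(t,u,u)N$ are all correct. But the concluding step as you have written it has a genuine gap. From $|\dot a(t,u,u)|\le C\|u\|_V^2$ and the coercivity estimate $\alpha\|u\|_V^2\le a(t,u,u)+\lambda N = -\tfrac12 N'+\lambda N$ one in fact gets the \emph{one-sided} inequality
\[
(\log N)''\ \ge\ c_2\,(\log N)'-c_1,\qquad c_2=\tfrac{C}{\alpha},\ c_1=\tfrac{2C\lambda}{\alpha},
\]
rather than the weaker $(\log N)''\ge -C_1|(\log N)'|-C_2$; since $-\tfrac12 N'+\lambda N\ge 0$, the sign is determined and you should keep it. More importantly, the weight you propose does not close the argument: for $\widetilde N=e^{\mu t+\nu t^2}N$ one has $(\log\widetilde N)''=2\nu+(\log N)''$, and since $(\log N)''$ is only bounded below by a quantity involving $(\log N)'$ (which you cannot a priori bound near $T$), no choice of $\mu,\nu$ makes $\log\widetilde N$ convex in $t$. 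The multiplicative-in-$t$ modification simply cannot absorb a first-derivative term with the correct sign.

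The fix is either a time reparametrization or a direct integrating-factor argument. Concretely, multiplying $g''-c_2 g'+c_1\ge 0$ (with $g=\log N$) by $e^{-c_2 t}$ shows that $t\mapsto e^{-c_2 t}\bigl(g'(t)-\tfrac{c_1}{c_2}\bigr)$ is nondecreasing on the maximal interval $(t_0,T_*)$ where $N>0$; fixing any $t_1$ in that interval, this gives $g'(t)\ge \tfrac{c_1}{c_2}+e^{c_2(T_*-t_1)}\bigl(g'(t_1)-\tfrac{c_1}{c_2}\bigr)$ for $t\in(t_1,T_*)$, i.e.\ $g'$ is bounded below, hence $g$ is bounded below, contradicting $N(T_*^-)=0$ with $T_*>0$. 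Equivalently, the substitution $\tau=e^{c_2 t}$ turns $g''-c_2g'$ into $c_2^2\tau^2 g_{\tau\tau}$, so $g+\tfrac{K}{2}\tau^2$ is convex \emph{in $\tau$}, from which your chord argument applies verbatim; it is convexity in the new variable, not in $t$, that is obtained. Finally, in setting up the interval you should take $T_*=\sup\{t:N(t)>0\}$; forward parabolic uniqueness (Gr\"onwall from $N'\le 2\lambda N$) gives $N>0$ on $[0,T_*)$ and $N(T_*)=0$, so the contradiction forces $T_*=0$. The regularization issue you flag at the end (justifying $N''$ under the stated weak regularity) is real and is where the technical weight of the Lions--Malgrange proof lies; acknowledging it is appropriate, but the convexity mechanism itself also needs the correction above.
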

	In our case, we have $V=H^1(M_t)$, $H=L^2(M_t)$ and $A(s)=L_{M_{t-s}}-\tilde H^2_{M_{t-s}}$.
	
	\begin{proof}[Proof of Lemma \ref{lem:dense}]
		We adapt the above proof as follows. Suppose $v\in H^1(M_t)$ satisfies $\langle v,\cT(t,0)u\rangle_Q=0$ for all $u\in L^2(M_0)$, where $\langle\cdot,\cdot\rangle_Q$ is the $Q(M_t)$-inner product. From the definition of the $Q$-norm,
		$$\langle v_1,v_2\rangle_{Q(M_t)}:=\frac{1}{2}(\|v_1+v_2\|^2_{Q(M_t)}-\|v_1\|^2_{Q(M_t)}-\|v_2\|^2_{Q(M_t)})=\int_{M_t}(\nabla v_1\cdot\nabla v_2+\Lambda v_1v_2)e^{-\frac{|x|^2}{4}}d\mu.$$
		Thus we have
		$\langle (\Lambda-\Delta)v, \cT(t,0)u\rangle=0$ where $\langle\cdot,\cdot\rangle$ is the $L^2(M_t)$-inner product, and $ (\Lambda-\Delta)v\in H^{-1}(M_t)$. Applying the adjoint heat kernel we get $\langle \cT^*(0,t)((\Lambda-\Delta)v), u\rangle=0$. We define similarly $w(s)=\cT^*(s,t)((\Lambda-\Delta)v)$, which satisfies $w(t)=0$ and $w(0)=(\Lambda-\Delta)v$ and $w$ solves the equation $\partial_s w=L_{M_{t-s}} w-\tilde H^2_{M_{t-s}} w. $ The above theorem of Lions-Malgrange implies that $w(0)=(\Lambda-\Delta)v=0$. Then Lax-Milgram implies that $v=0. $ This completes the proof.
	\end{proof}

	\section{Transplantation}\label{AS:Transplantation}
	
	Suppose $\Sigma$ and $\Sigma_1$ are embedded hypersurfaces and $\Sigma_1$ is a graph of function $f$ over $\Sigma$, i.e. $\Sigma_1=\{x+f(x)\bn(x):x\in\Sigma\}$. Then a function $g$ defined on $\Sigma_1$ can be viewed as a function $\bar{g}$ defined on $\Sigma$ as follows: we define $\bar{g}(x)=g(x+f(x)\bn(x))$. Such an identification is called \emph{transplantation} in \cite{SX1}.
	
	The following theorem was proved in the appendix of \cite{SX1}.
	
	\begin{theorem}\label{Thm:Appendix closeness of graphs}
		Let $\Sigma$ be a fixed embedded closed hypersurface. Then given $\eps>0$, there exists a constant $\mu(\eps)>0$ such that the following is true: Suppose $\Sigma_1$ is the graph $\{x+f\bn(x):x\in\Sigma\}$ over $\Sigma$, and $\Sigma_2$ is the graph $\{y+g\bn(y):y\in\Sigma_1\}$ over $\Sigma_1$, and $\|f\|_{C^4(\Sigma)}\leq \mu $, $\|g\|_{C^{2,\alpha}(\Sigma_1)}\leq \mu$, then $\Sigma_2$ is a graph of a function $v$ on $\Sigma$,
		and
		\begin{equation*}
			\|v-(f+g)\|_{C^{2,\alpha}(\Sigma)}\leq \eps\|g\|_{C^{2,\alpha}(\Sigma_1)}.
		\end{equation*}
		Here we transplant $g$ on $\Sigma_1$ to a function on $\Sigma$, and still use $g$ to denote it.
	\end{theorem}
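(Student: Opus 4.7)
The plan is three-fold: (i) realize $v$ as a $C^{2,\alpha}$ function on $\Sigma$ via the tubular neighborhood of $\Sigma$ together with the implicit function theorem; (ii) derive a pointwise identity for $v-(f+g)$ whose right-hand side manifestly carries a factor of $f$ or a derivative of $f$; (iii) upgrade this identity to the $C^{2,\alpha}$ bound by differentiating twice and applying standard composition and product inequalities in Hölder spaces.

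\textbf{Step 1 (existence and regularity of $v$).} Let $U$ be a tubular neighborhood of $\Sigma$ with nearest-point projection $\pi\colon U\to\Sigma$ of width bounded below uniformly on $\Sigma$. Write $\Phi_f(x)=x+f(x)\bn(x)$ for the parametrization of $\Sigma_1$ and let $\bn_1$ be its unit normal, a nonlinear first-order differential expression in $f$ satisfying $\bn_1=\bn+O(\|f\|_{C^1})$. Define
\[
\Psi\colon\Sigma\to\R^{n+1},\qquad \Psi(x)=\Phi_f(x)+g(\Phi_f(x))\,\bn_1(\Phi_f(x)).
\]
When $\mu$ is small, $\Psi(\Sigma)=\Sigma_2\subset U$, and $\pi\circ\Psi$ is a $C^{2,\alpha}$-small perturbation of the identity on $\Sigma$, hence a $C^{2,\alpha}$-diffeomorphism. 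This realizes $\Sigma_2$ as the graph $\{x'+v(x')\bn(x'):x'\in\Sigma\}$ of a $C^{2,\alpha}$ function $v$ on $\Sigma$.

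\textbf{Step 2 (pointwise bilinear identity).} Fix $x\in\Sigma$, set $y=\Phi_f(x)\in\Sigma_1$ and $x'=\pi(\Psi(x))\in\Sigma$. Taking the inner product of $\Psi(x)-x'=v(x')\bn(x')$ with $\bn(x')$ and expanding $\bn(x')=\bn(x)+O(|x-x'|)$, $\bn_1(y)=\bn(x)+E(x)$ where $E$ is a first-order expression in $\nabla f$ with $\|E\|_{C^0}=O(\|f\|_{C^1})$, we obtain
\[
v(x')=f(x)+g(y)+R(x;f,g).
\]
Here $R$ is a finite sum of terms each carrying a factor of $f$, $\nabla f$, or $\nabla^2 f$; the tangential displacement $x'-x$ is itself $O(\|f\|_{C^1}\|g\|_{C^0})$, which produces the cross terms. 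Under the transplantation convention we replace $g(y)$ by $g(x')$ at the cost of a further term $\nabla g\cdot(y-x')$ of the same bilinear type. Thus the naive leading order $f(x')+g(x')$ is matched, and every remainder in $v-(f+g)$ contains a factor involving $f$.

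\textbf{Step 3 (Hölder upgrade).} Differentiating the identity in Step 2 twice with respect to $x$, each entry of $\nabla^2\bigl(v-(f+g)\bigr)$ becomes a polynomial in derivatives of $f$ up to order four, derivatives of $g$ up to order two, derivatives of $\bn$, and inverses of uniformly nondegenerate matrices, with every monomial still containing at least one factor built from $f$. Standard product and chain-rule estimates in $C^{2,\alpha}$ (applied as in the analogous transplantation arguments in \cite{SX1}) bound each such monomial by $C\|f\|_{C^{2,\alpha}}\|g\|_{C^{2,\alpha}(\Sigma_1)}$; the hypothesis $\|f\|_{C^4}\le\mu$ supplies the additional regularity consumed by $\nabla^2\bn_1\sim\nabla^3 f$ and by the diffeomorphism $(\pi\circ\Psi)^{-1}$. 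Taking $\mu(\eps)$ with $C\mu<\eps$ yields $\|v-(f+g)\|_{C^{2,\alpha}(\Sigma)}\le\eps\|g\|_{C^{2,\alpha}(\Sigma_1)}$.

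\textbf{Main obstacle.} The delicate point is the combinatorial verification in Step 3 that after two differentiations every surviving term in $v-(f+g)$ still contains a factor drawn from $\{f,\nabla f,\nabla^2 f,\nabla^3 f\}$, so that the smallness of $\mu$ can be absorbed \emph{without} requiring smallness of $\|g\|_{C^{2,\alpha}}$. This cancellation is not accidental: it reflects, on the one hand, the transplantation identification $g(y)=(g\circ\Phi_f)(x)$ and, on the other, the identity $x=x'$ when $f\equiv 0$. A clean way to make the bilinear structure manifest is to introduce the one-parameter family $\Sigma_1^s$ associated with $sf$ (or equivalently $\Sigma_2^s$ with $sg$) and differentiate $v$ in $s$ at $s=0$; the first variation recovers $f+g$ exactly, and the remainder term from the second variation supplies the bilinear factor of $f$ needed to conclude.
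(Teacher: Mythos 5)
The paper contains no proof of this statement---it is quoted verbatim from the appendix of \cite{SX1} (``The following theorem was proved in the appendix of \cite{SX1}.'')---so there is no in-text argument to compare your proposal against; it can only be judged on its own terms. Your outline is the natural approach: parameterize $\Sigma_2$ by $\Psi=\Phi_f + (g\circ\Phi_f)\,\bn_1\circ\Phi_f$, compose with the nearest-point projection, and Taylor-expand. As an outline it is sound.

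Two points need more care, and they are exactly where the content of the theorem lives. First, Step~2 repeatedly stresses that every remainder carries ``a factor of $f$,'' but the target bound $\le\eps\|g\|_{C^{2,\alpha}(\Sigma_1)}$ also forces every surviving monomial to carry a factor built from $g$: the error must vanish not only when $f\equiv0$ (so $v=g$) but also when $g\equiv 0$ (so $\Sigma_2=\Sigma_1$ and $v=f$ exactly). You gesture at this via the ``cross terms'' remark, but since the bilinearity in $(f,g)$ \emph{is} the estimate, it should be stated and tracked explicitly; note also that higher powers $\|f\|^a\|g\|^b$ with $a,b\ge1$ are absorbed into $\eps\|g\|$ only because both norms are hypothesized $\le\mu$. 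Second, Step~3 is asserted rather than executed: confirming that after two derivatives each monomial still carries both factors, and that $\|f\|_{C^4}$ precisely covers the derivatives consumed by $\nabla^2\bn_1\sim\nabla^3 f$, by $\nabla^2(\pi\circ\Psi)^{-1}$, and by the H\"older seminorm $[\nabla^3 f]_\alpha$, is the theorem. Your ``main obstacle'' paragraph honestly concedes this is not done, and the appeal to ``analogous transplantation arguments in \cite{SX1}'' is circular here, since \cite{SX1} is the paper's source for the statement itself. Your suggested one-parameter family (take $sg$, say) is a good organizing device: $v^0=f$, $\partial_s v^s|_{s=0}=g$ exactly, and the error is controlled by $\sup_{s\in[0,1]}\|\partial_s^2 v^s\|_{C^{2,\alpha}(\Sigma)}$, which must then be bounded uniformly for $\|f\|_{C^4}\le\mu$ by a quantity $\le C\mu\|g\|_{C^{2,\alpha}(\Sigma_1)}$---but writing out that second variation is precisely where the deferred combinatorics reappear, so the device relocates rather than removes the work.
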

	
	For our purpose, we state the following theorem for the $Q$ estimate. The proof is the same as the proof in \cite{SX1}, because in \cite{SX1} actually a pointwise bound was proved.
	
	\begin{theorem}\label{Thm:Appendix closeness of graphs-Q}
		Suppose the assumptions in Theorem \ref{Thm:Appendix closeness of graphs}. Then we have 
		\begin{equation*}
			\|v-(f+g)\|_Q\leq \eps\|g\|_Q.
		\end{equation*}
		Here we transplant $g$ on $\Sigma_1$ to a function on $\Sigma$ and still use $g$ to denote it.
	\end{theorem}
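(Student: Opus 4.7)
The plan is to reduce the $Q$-norm estimate to a pointwise bound on $v-(f+g)$ and its gradient, leveraging the fact noted in the remark before the statement: the proof of Theorem \ref{Thm:Appendix closeness of graphs} in \cite{SX1} actually establishes pointwise control, and the $C^{2,\alpha}$ bound there is extracted by taking suprema. For the $Q$-version, the same pointwise bound is integrated against the volume measure on $\Sigma$ instead.

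The first step is to express $v$ implicitly. For each $x\in\Sigma$, the point $x+v(x)\bn(x)\in\Sigma_2$ lies on a unique segment normal to $\Sigma_1$, so one writes
\[
x+v(x)\bn(x)=x'+f(x')\bn(x')+g\bigl(x'+f(x')\bn(x')\bigr)\bn_{\Sigma_1}\bigl(x'+f(x')\bn(x')\bigr)
\]
for a unique footpoint $x'=\pi(x)\in\Sigma$. When $\mu$ is sufficiently small, $\pi$ is a $C^3$-small diffeomorphism of $\Sigma$ with $\|\pi-\mathrm{id}\|_{C^3}\leq C\mu$, as already observed in \cite{SX1}. Taylor expanding the implicit relation about $x'=x$ and solving for $v(x)$ gives $v(x)=f(x)+\bar g(x)+R(x)$, where $\bar g$ denotes the transplant of $g$ to $\Sigma$ and the remainder satisfies the pointwise bound
\[
|R(x)|+|\nabla R(x)|\leq C\mu\bigl(|\bar g(x)|+|\nabla\bar g(x)|\bigr).
\]
This is exactly the pointwise estimate produced in \cite{SX1}: the coefficients in $R$ involve only $f(x)$, $\nabla f(x)$, the second fundamental form of $\Sigma$ at $x$, and the deviation $\pi(x)-x$, each of which is $O(\mu)$ under the hypotheses of Theorem \ref{Thm:Appendix closeness of graphs}.

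Squaring the pointwise bound and integrating against $d\cH^n$ on $\Sigma$, and using that the $Q$-norm is equivalent to the $H^1$-norm, one obtains
\[
\|v-(f+\bar g)\|_Q^2\leq C\int_\Sigma\bigl(|\nabla R|^2+|R|^2\bigr)\,d\cH^n\leq C\mu^2\int_\Sigma\bigl(|\nabla\bar g|^2+|\bar g|^2\bigr)\,d\cH^n\leq C\mu^2\|\bar g\|_Q^2.
\]
Choosing $\mu=\mu(\eps)$ small enough that $C\mu^2\leq\eps^2$ yields the desired inequality.

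The only delicate step — already handled in \cite{SX1} for the $C^{2,\alpha}$ case — is verifying that the remainder $R(x)$ is dominated by the pointwise values $|\bar g(x)|+|\nabla\bar g(x)|$, rather than by any global norm of $g$; this is what makes the passage from the pointwise bound to the integrated estimate legitimate. Since it is built into the form of the Taylor expansion, no additional geometric ingredient beyond those already appearing in the proof of Theorem \ref{Thm:Appendix closeness of graphs} is required, and the present theorem follows at once.
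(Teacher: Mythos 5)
Your proposal is correct and follows exactly the route the paper itself takes: the paper's proof of Theorem \ref{Thm:Appendix closeness of graphs-Q} is simply the observation that the argument in \cite{SX1} for Theorem \ref{Thm:Appendix closeness of graphs} already produces a pointwise bound on $v-(f+g)$ and its gradient, so that squaring and integrating against $d\cH^n$ (and using the equivalence of the $Q$- and $H^1$-norms) gives the stated $Q$-estimate. One small caveat worth keeping in mind: the pointwise remainder naturally involves $g$ and $\nabla g$ evaluated at the shifted footpoint $\pi(x)$ rather than at $x$ itself, so the passage to the integrated bound implicitly uses a change of variables along the near-identity diffeomorphism $\pi$ (whose Jacobian is $1+O(\mu)$); this is harmless and does not affect your conclusion.
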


\end{document}